\numberwithin{equation}{section}
\theoremstyle{plain}
\newtheorem{theorem}[subsection]{Theorem}
\newtheorem{proposition}[subsection]{Proposition}
\newtheorem{lemma}[subsection]{Lemma}
\theoremstyle{definition}
\renewcommand{\leq}{\leqslant}
\renewcommand{\geq}{\geqslant}
\newsavebox{\proofbox}
\savebox{\proofbox}{\begin{picture}(7,7)%
  \put(0,0){\framebox(7,7){}}\end{picture}}
\newcommand{\md}[1]{\ensuremath{(\operatorname{mod}\, #1)}}
\newcommand{\mdsub}[1]{\ensuremath{(\mbox{\scriptsize mod}\, #1)}}
\newcommand\E{\mathbb{E}}
\newcommand\Z{\mathbb{Z}}
\newcommand\R{\mathbb{R}}
\newcommand\C{\mathbb{C}}
\newcommand\N{\mathbb{N}}
\newcommand\Q{\mathbb{Q}}
\newcommand\id{\operatorname{id}}
\newcommand\Lip{{\operatorname{Lip}}}
\newcommand\eps{\varepsilon}
\newcommand\ab{\operatorname{ab}}
\newcommand\MN{\operatorname{MN}}
\newcommand\poly{\operatorname{poly}}
\def\proof{\noindent\textit{Proof. }}
\def\remarks{\noindent\textit{Remarks. }}
\def\endproof{\hfill{\usebox{\proofbox}}}
\begin{document}

\title{The M\"obius function is strongly orthogonal to nilsequences}

\author{Ben Green}
\address{Centre for Mathematical Sciences\\
Wilberforce Road\\
     Cambridge CB3 0WA\\
     England
}
\email{b.j.green@dpmms.cam.ac.uk}

\author{Terence Tao}

\address{UCLA Department of Mathematics\\ Los Angeles\\ CA 90095-1596\\ USA}

\email{tao@math.ucla.edu}

\subjclass{}

\begin{abstract}
We show that the M\"obius function $\mu(n)$ is strongly asymptotically orthogonal to any polynomial nilsequence $(F(g(n)\Gamma))_{n \in \N}$. Here, $G$ is a simply-connected nilpotent Lie group with a discrete and cocompact subgroup $\Gamma$ (so $G/\Gamma$ is a \emph{nilmanifold}), $g : \Z \rightarrow G$ is a polynomial sequence and $F: G/\Gamma \to \R$ is a Lipschitz function. More precisely, we show that $|\frac{1}{N} \sum_{n=1}^N \mu(n) F(g(n) \Gamma)| \ll_{F,G,\Gamma,A} \log^{-A} N$ for all $A > 0$. In particular, this implies the \emph{M\"obius and Nilsequence conjecture} $\mbox{MN}(s)$ from our earlier paper \cite{green-tao-linearprimes} for every positive integer $s$. This is one of two major ingredients in our programme in \cite{green-tao-linearprimes} to establish a large number of cases of the \emph{generalised Hardy-Littlewood conjecture}, which predicts how often a collection $\psi_1,\dots,\psi_t : \Z^d \rightarrow \Z$ of linear forms all take prime values. The proof is a relatively quick application of the results in our recent companion paper \cite{green-tao-nilratner}.

We give some applications of our main theorem. We show, for example, that the M\"obius function is uncorrelated with any bracket polynomial such as $n\sqrt{3}\lfloor n\sqrt{2}\rfloor$. We also obtain a result about the distribution of nilsequences $(a^nx\Gamma)_{n \in \N}$ as $n$ ranges only over the primes.
\end{abstract}

\maketitle

\section{Introduction}

\textsc{Important remark.} This paper is intimately tied to, and is intended to be read in conjunction with, the longer companion paper \cite{green-tao-nilratner}, which proves results about the distribution of finite polynomial orbits on nilmanifolds.   In particular, we shall make heavy use of the notation and lemmas from that paper.

The aim of this paper is to establish what the authors have been referring to as the \emph{M\"obius and Nilsequence conjecture} $\mbox{MN}(s)$, first stated as \cite[Conjecture 8.5]{green-tao-linearprimes}. Roughly speaking, this states that the \emph{M\"obius function} $\mu(n)$, defined as $(-1)^k$ when $n$ is the product of $k$ distinct primes, and $0$ otherwise, is asymptotically strongly orthogonal to any Lipschitz $s$-step nilsequence $(F(a^n x))_{n \in \Z}$, in the sense that the inner product 
$$ \E_{n \in [N]} \mu(n) F(a^n x)$$
of these two functions on $[N] := \{1,\dots,N\}$ decays to zero faster than any fixed power of $1/\log N$.   Here and in the sequel we use the averaging notation $\E_{x \in X} f(x) := \frac{1}{|X|} \sum_{x \in X} f(x)$ for any finite set $X$.  Recall also that an \emph{Lipschitz $s$-step nilsequence} is any sequence of the form $F(a^n x)$, where $a$ is an element of an $s$-step connected and simply connected nilpotent Lie group $G$, $x$ is an element of the \emph{nilmanifold} $G/\Gamma$ for some discrete cocompact subgroup $\Gamma \leq G$ of $G$, and $F: G/\Gamma \to \R$ is a Lipschitz function.

The difficulty of this conjecture increases with $s$.  The case $s=0$ of this conjecture is the estimate
\[ \E_{n \in [N]} \mu(n) \ll_{A} \log^{-A} N.\]
The stronger estimate
\[ \E_{n \in [N]} \mu(n) \ll e^{-c\sqrt{\log N}}\] is essentially equivalent to the prime number theorem (with classical error term).

The case $s = 1$ may be reduced by Fourier analysis to the estimate
\begin{equation}\label{davenport-est} |\E_{n \in [N]} \mu(n) e(\alpha n)| \ll_A \log^{-A} N\end{equation}
where $e(x) := e^{2\pi i x}$, required to hold uniformly for all $\alpha \in \R$. This was established by Davenport \cite{davenport} in the 1930s by modifying Vinogradov's method of bilinear forms (or ``Type I and Type II sums''). 

In the case $s = 2$ the conjecture was established by the authors in \cite{green-tao-u3mobius}. For a more complete discussion of the conjecture and the reasons for being interested in it (and in particular, its applications to the generalised Hardy-Littlewood conjecture on the number of solutions to systems of linear equations in which the unknowns are all prime) the reader may refer to the introduction of \cite{green-tao-u3mobius}, the first several sections of \cite{green-tao-linearprimes}, or any of the expository articles \cite{green-icm,green-cdm,tao-coates,tao-icm}.

In this paper we settle the M\"obius and Nilsequence conjecture.  In fact, we shall prove the marginally stronger result that the M\"obius function is asymptotically strongly orthogonal to any \emph{polynomial} nilsequence $(F(g(n)\Gamma))_{n \in \Z}$.  

\begin{theorem}[Main Theorem]\label{mainthm}
Let $G/\Gamma$ be a nilmanifold of some dimension $m \geq 1$, let $G_\bullet$ be a filtration\footnote{In other words, $G_\bullet = (G_i)_{i=0}^{d}$ where $G=G_0\subset G_1 \subset \ldots G_d$ is a descending sequence of Lie groups and $[G_i,G_j] \subset G_{i+j}$ for all $i,j \geq 0$, with the convention that $G_i$ is trivial for $i > d$; see \cite[Definition 1.2]{green-tao-nilratner}.} of $G$ of some degree $d \geq 1$, and let $g \in \poly(\Z,G_\bullet)$ be a polynomial sequence\footnote{A sequence $g: \Z \to G$ lies in $\poly(\Z,G_\bullet)$ if $\partial_{h_1} \ldots \partial_{h_i} g$ takes values in $G_i$ for all $h_1,\ldots,h_i \in \Z$ and $i \ge 0$, where $\partial_h g(n) := g(n+h) g(n)^{-1}$; see \cite[Definition 1.11]{green-tao-nilratner} and the ensuing discussion.}. Suppose that $G/\Gamma$ has a $Q$-rational Mal'cev basis\footnote{The notion of a $Q$-rational Mal'cev basis is defined in \cite[Definition 2.6]{green-tao-nilratner} and the construction of the metric $d_{\mathcal{X}}$ is given in the same section.} $\mathcal{X}$ for some $Q \geq 2$, defining a metric $d_{\mathcal X}$ on $G/\Gamma$. Suppose that $F : G/\Gamma \rightarrow [-1,1]$ is a Lipschitz function. Then we have the bound
\[ |\E_{n \in [N]} \mu(n) F(g(n)\Gamma)| \ll_{m,d,A} Q^{O_{m,d,A}(1)} (1 + \Vert F \Vert_\Lip) \log^{-A} N\] 
for any $A > 0$ and $N \geq 2$.  The implied constant is ineffective.
\end{theorem}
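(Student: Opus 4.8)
The plan is to run the classical bilinear (Type~I / Type~II) method for sums against $\mu$, feeding in the quantitative equidistribution theory for polynomial orbits from the companion paper \cite{green-tao-nilratner} as the only substantial new ingredient.

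\emph{Reductions.} Subtracting the mean changes the left-hand side by $|\int_{G/\Gamma}F|\,|\E_{n\in[N]}\mu(n)|\ll_A\log^{-A}N$ by the prime number theorem (the $s=0$ case), so we may assume $\int_{G/\Gamma}F=0$; we may also pass to $G^\circ$ to assume $G$ connected, and, since $|\E_{n\in[N]}\mu(n)F(g(n)\Gamma)|\le 1$ always, we may assume $Q$ and $\Vert F\Vert_\Lip$ are both $\ll_{m,d,A}\log^{O_{m,d,A}(1)}N$. The key structural step, via the factorization theorem of \cite{green-tao-nilratner}, is to reduce to the case in which $g$ is totally $\delta$-equidistributed on $[N]$ with $\delta:=\log^{-A_0}N$ for a suitably large $A_0=A_0(m,d,A)$. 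In general that theorem writes $g=\eps\gamma g'$ with $\eps$ smooth, $\gamma$ rational of period $\ll Q^{O_{m,d}(1)}$, and $g'$ totally equidistributed inside a $Q^{O_{m,d}(1)}$-rational subgroup $G'\le G$. When $G'=G$ we absorb $\eps$ and $\gamma$ by partitioning $[N]$ into $\ll\log^{O_{m,d,A}(1)}N$ subprogressions on which $\eps$ is essentially constant and $\gamma$ is constant, landing in the equidistributed case; when $G'\subsetneq G$, the same manipulations (standard in \cite{green-tao-nilratner}) replace $F(g(n)\Gamma)$ on each subprogression by $\hat F(\hat g(n)\hat\Gamma)$ on a nilmanifold of dimension $\dim G'<\dim G$, and we conclude by induction on $m$, the base case $m=0$ being trivial.

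\emph{Bilinear decomposition and Type~I.} In the equidistributed case, apply a Vaughan-type identity (as in \cite{green-tao-u3mobius}, following \cite{davenport}) to reduce the required bound to a \emph{Type~I} estimate $\sum_{d\le N^{\theta}}\bigl|\E_{m\le N/d}F(g_{d,r}(m)\Gamma)\bigr|\ll_A\log^{-A}N$, uniform over residues $r$ and with $g_{d,r}(m):=g(dm+r)$, and a \emph{Type~II} estimate $\bigl|\E_{w\sim W}\E_{v}\,a_w b_v\,F(g(wv)\Gamma)\bigr|\ll_A\log^{-A}N$ for dyadic $N^{\theta_1}\le W\le N^{\theta_2}$ and $\Vert a\Vert_\infty,\Vert b\Vert_\infty\le 1$, where $\theta,\theta_1,\theta_2$ are small absolute constants. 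For Type~I, note that $g_{d,r}\in\poly(\Z,G_\bullet)$, so the equidistribution theorem of \cite{green-tao-nilratner} on $[N/d]$ gives, for each $d$, either $\bigl|\E_{m\le N/d}F(g_{d,r}(m)\Gamma)\bigr|\ll\delta'(1+\Vert F\Vert_\Lip)$, or a nontrivial horizontal character $\eta$ of height $\ll(\delta')^{-O_{m,d}(1)}$ with $\Vert\eta\circ g_{d,r}\Vert_{C^\infty[N/d]}\ll(\delta')^{-O_{m,d}(1)}$. With $\delta'$ a suitable power of $1/\log N$, the first alternative contributes acceptably; for the second, total equidistribution of $g$ ensures that no $d\le\log^B N$ can occur (multiplying $\eta$ through by $d^{O(1)}$ would otherwise exhibit a low-height character small against $g$ on $[N]$), while for each of the boundedly many admissible $\eta$ a Vinogradov-type counting lemma bounds the number of ``bad'' $d$ in a dyadic block by a large negative power of $\log N$ times the block length; summing $1/d$ over bad $d$ then yields the Type~I bound.

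\emph{Type~II and the main obstacle.} For Type~II, Cauchy--Schwarz in $w$ bounds the square of the average by a harmless multiple of $\E_{v,v'}\bigl|\E_{w\sim W}F(g(wv)\Gamma)\overline{F(g(wv')\Gamma)}\bigr|$; here $w\mapsto(g(wv),g(wv'))$ lies in $\poly(\Z,(G\times G)_\bullet)$ and $F\otimes\overline{F}$ is Lipschitz on the product nilmanifold $(G/\Gamma)^2$, to which the equidistribution theorem of \cite{green-tao-nilratner} applies (legitimately, since this is an external input, not a recursive appeal to the present theorem in dimension $2m$). A pair $(v,v')$ for which the inner average is not $\ll\log^{-O(1)}N$ then yields horizontal characters $(\eta_1,\eta_2)\ne(0,0)$ of bounded height with $\Vert\eta_1\circ g_{v}+\eta_2\circ g_{v'}\Vert_{C^\infty[W]}$ small, where $g_v(w):=g(wv)$. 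The heart of the proof is to show that such pairs are negligible in number: expanding $\eta_i\circ g(wv)$ as a polynomial in $w$ whose coefficients are polynomials in $v$, the smoothness condition forces these coefficient-polynomials to be nearly rational at appropriate scales, and a second, nested Vinogradov/pigeonhole argument in the variables $v,v'$ shows that if this held for a non-negligible proportion of pairs then some single nontrivial low-height horizontal character of $G$ would be small against $g$ on $[N]$ — contradicting total equidistribution. Carrying out this propagation of bilinear non-equidistribution down to linear non-equidistribution, with all losses kept polynomial in $Q$ and of the shape $\log^{O(1)}N$, is the main technical difficulty; granting it, combining the Type~I and Type~II bounds (and optimising $\theta$) proves the theorem, the stated ineffectivity being inherited from the Siegel--Walfisz ingredient entering through the prime number theorem and Vaughan's identity.
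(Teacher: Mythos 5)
Your overall strategy --- factorize $g$ using the companion paper's theorem, absorb the smooth part $\eps$ and the rational part $\gamma$ by splitting $[N]$ into subprogressions, and treat the equidistributed case by Vaughan's Type I/II method together with the quantitative Leibman dichotomy --- is the same as the paper's. But there are two concrete gaps. First, your induction on the dimension $m$ is applied to the statement of Theorem \ref{mainthm}, which averages over all of $[N]$; yet the sums you actually produce after absorbing $\eps$ (short intervals) and $\gamma$ (periodicity modulo $q \ll \log^{O(1)}N$) have the form $\E_{n \in [N]} \mu(n) 1_P(n) \hat F(\hat g(n)\hat\Gamma)$ with $P$ a progression of small modulus, and the unweighted inductive hypothesis does not cover these. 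Once the statement is strengthened to carry the weight $1_P$, its constant/mean component forces you to bound $\E_{n \in [N]} \mu(n)1_P(n)$ uniformly for moduli up to $\log^{O(1)}N$, i.e.\ you need M\"obius in arithmetic progressions via Dirichlet $L$-functions (Siegel--Walfisz). This ingredient appears nowhere in your proposal: your opening mean-subtraction by the prime number theorem does not survive passage to a proper subgroup, since a function of mean zero on $G/\Gamma$ typically has nonzero mean on the sub-nilmanifold $G'/\Gamma'$; and your attribution of the ineffectivity to ``the prime number theorem and Vaughan's identity'' is a symptom --- Vaughan's identity needs no zero-free region, and the sole ineffective input is the Siegel zero entering through M\"obius in progressions. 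The paper avoids induction altogether: it proves the equidistributed case (Proposition \ref{equi-prop}) with the $1_P$ weight built in, applies it once on the conjugated sub-nilmanifold, and disposes of the constant term with Proposition \ref{mob-period} (Appendix \ref{app-A}).

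Second, the step you explicitly grant --- propagating Type I and bilinear Type II non-equidistribution back to a single low-height horizontal character small against $g$ on $[N]$ --- is not a citable black box; it is the mathematical core of the equidistributed case. The paper carries it out as follows: after pigeonholing to one character $\psi$ with $\Vert \psi\circ g_k\Vert_{C^\infty[N_k]}$ small for $\gg \delta^{O(1)}K$ values of $k$, a Waring-type lemma (with $t \geq 2^j+1$, using the classical bound on the number of representations as sums of $t$ $j$-th powers) shows $\Vert q l \beta_j\Vert_{\R/\Z}$ is small for a positive proportion of $l \leq 10^d K^j$, and the recurrence lemma for linear phases \cite[Lemma 3.2]{green-tao-nilratner} then gives $\Vert qq'\beta_j\Vert_{\R/\Z} \ll \delta^{-O(1)}N^{-j}$, contradicting total equidistribution; the Type II case reduces to this same Type I situation simply by fixing a popular $k'$ and subtracting (the lower bound $K \gg N^{1/3}$ being needed there to discard $k=0$), so no genuinely nested bilinear pigeonhole is required. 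Your sketch points in the right direction, but without these two ingredients --- the M\"obius-in-progressions estimate and the Waring-plus-recurrence propagation --- the proof is not complete.
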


\remarks By specialising to the linear case $g(n) := a^n h$ for some $a,h \in G$ (and using the existence of $Q$-rational Mal'cev bases, see \cite[Proposition A.9]{green-tao-nilratner}), Theorem \ref{mainthm} immediately implies the \emph{M\"obius and nilsequences conjecture} \cite[Conjecture 8.5]{green-tao-linearprimes}. In fact it gives a somewhat more precise result, since the dependence on $Q$ and $\Vert F \Vert_{\Lip}$ is given quite explicitly. For the application of Theorem \ref{mainthm} in \cite{green-tao-linearprimes}, however, knowledge of these dependencies is not necessary.

The ineffectivity of the bound in Theorem \ref{mainthm} already occurs for sufficiently large $A$ in the 1-step case (which, as mentioned before, is essentially \eqref{davenport-est}), and is ultimately due to the well-known ineffective bounds on Siegel zeroes.  On the other hand, the remainder of the argument is effective, and so any effective bound for Siegel's theorem would imply effective bounds for Theorem \ref{mainthm}.  In particular, this would be the case if one assumed GRH. In fact, in that case it is not difficult to see from modifying the arguments below that we can replace the logarithmic decay $\log^{-A} N$ by polynomial decay $N^{-c}$ for some $c > 0$ depending only on $d$ and $m$.

The authors learnt in \cite{green-tao-nilratner} that it is in many ways more natural to consider the class of polynomial sequences $\poly(\Z,G_\bullet)$ rather than simply the class of linear sequences $n \mapsto a^n x$. This is ultimately due to the stability of the polynomial class under a wide variety of operations, such as pointwise multiplication. On the other hand, these two categories are certainly closely related (and are, in some sense, equivalent): see \cite{leibman-single-poly} for further discussion.

\textsc{Acknowledgements.} The first author is partly supported by a Leverhulme Prize.
The second author is supported by a grant from the Macarthur Foundation and by NSF grant DMS-0649473. We are extremely grateful to the referee for a careful reading of the paper, and for suggesting and explaining an alternative (and more self-contained) proof of Theorem \ref{prime-returns}.

\section{Reducing to the equidistributed case}
\label{equi-sec}
To prove Theorem \ref{mainthm}, we will apply \cite[Theorem 1.19]{green-tao-nilratner} to decompose $g$ as a product $\eps g' \gamma$ where $\eps$ is ``smooth'', $\gamma$ is ``rational'' and $g'$ is highly equidistributed in some closed subgroup $G' \subseteq G$. We will recall the precise statement shortly.

In ths section we shall show how the rather harmless factors $\eps$ and $\gamma$ in the above factorisation may be eliminated, and then make an additional reduction to the case $\int_{G/\Gamma} F = 0$ (using the Haar measure on $G/\Gamma$, of course). This leaves us with the task of proving an ``equidistributed'' case of Theorem \ref{mainthm}: see Proposition \ref{equi-prop} below.

For the rest of the paper, all constants $c,C$, including those in the asymptotic notation $\ll$ and $O()$, are allowed to depend on $m$ and $d$. Different occurrences of the letters $c,C$ may represent different constants; typically we will have $0 < c \ll 1 \ll C < \infty$. For ease of notation we drop the subscript whenever Lipschitz norms are mentioned, so $\Vert F \Vert_{\Lip}$ becomes simply $\Vert F \Vert$.

Recall from \cite[Definition 1.3(v)]{green-tao-nilratner} that a sequence $(g(n) \Gamma)_{n \in [N]}$ in a nilmanifold is \emph{totally $\delta$-equidistributed} if we have
\begin{equation}\label{equi}
|\E_{n \in P} F(g(n)\Gamma)| \leq \delta \Vert F \Vert
\end{equation}
for all Lipschitz functions $F : G/\Gamma \rightarrow \C$ with $\int_{G/\Gamma} F = 0$ and all arithmetic progressions $P \subseteq [N]$ of length at least $\delta N$.

In the next section we shall establish the following result about the lack of correlation of M\"obius with equidistributed nilsequences.

\begin{proposition}[M\"obius is orthogonal to equidistributed sequences]\label{equi-prop}
Let $m \geq 0$, $d \geq 1$ be integers and let $N \geq 1$ be an integer parameter which is sufficiently large depending on $m$ and $d$. Let $\delta$, $0 < \delta < 1/2$, and $Q \geq 2$ be real parameters. Let $G/\Gamma$ be an $m$-dimensional nilmanifold, and suppose that $G_{\bullet}$ is a filtration of degree $d$. Suppose that $G/\Gamma$ has a $Q$-rational Mal'cev basis $\mathcal{X}$ adapted to the filtration $G_{\bullet}$. Let $g \in \poly(\Z,G_{\bullet})$ and suppose that $(g(n)\Gamma)_{n \in [N]}$ is totally $\delta$-equidistributed. Then for any function $F : G/\Gamma \rightarrow \R$ with $\int_{G/\Gamma} F = 0$ and for any arithmetic progression $P \subseteq [N]$ of size at least $N/Q$, we have the bound
\[ |\E_{n \in [N]} \mu(n) 1_P(n) F(g(n)\Gamma)| \ll \delta^c Q \Vert F \Vert \log N.\]
\end{proposition}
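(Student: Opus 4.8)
The standard route to bounds of the form $|\E_{n \in [N]} \mu(n) a_n| \ll \delta^c \log N$ for structured sequences $a_n$ is Vinogradov's method of Type I and Type II sums, which reduces the problem to controlling bilinear averages of the sequence $a_n = 1_P(n) F(g(n)\Gamma)$. By Vaughan's identity (or an analogous combinatorial identity for $\mu$), it suffices to obtain good bounds for the Type I sums
\[ \sum_{d \leq D} \Bigl| \sum_{w} F(g(dw)\Gamma) 1_P(dw) \Bigr| \]
and the Type II sums
\[ \sum_{D_1 \leq w \leq D_2} \Bigl| \sum_{d} c_d d_{dw} F(g(dw)\Gamma) 1_P(dw) \Bigr|, \]
for suitable ranges of the parameters, where in the Type II case the inner sum is bilinear in two coefficient sequences supported on dyadic-type ranges. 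The plan is to feed into these sums the quantitative equidistribution hypothesis: $(g(n)\Gamma)_{n \in [N]}$ being totally $\delta$-equidistributed controls $\E_{n \in P'} F(g(n)\Gamma)$ for all long arithmetic progressions $P'$ and all mean-zero Lipschitz $F$.

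For the Type I sums this is essentially immediate: the inner sum over $w$ is $F$ evaluated along $g$ restricted to the progression $\{n : n \equiv 0 \md{d}, n \in P\}$, which is an arithmetic progression of length roughly $N/d$; provided $d$ is not too large (say $d \leq \delta^{-c'} $ for a small $c'$), this progression has length $\geq \delta N$ and total equidistribution gives a gain of $\delta \Vert F \Vert$. Summing over $d \leq D$ loses a factor $D$ and the extra $Q$ and $\log N$ come from the number of progressions and the harmonic sum; the main point is just to choose the cutoff $D$ as a small power of $1/\delta$ so that the saved $\delta$ beats the lost $D$.

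The Type II sums are the main obstacle, and here the key input must be a bound for the \emph{correlation} $\E_{n} F(g(n)\Gamma) \overline{F(g(qn+r)\Gamma)}$ of a nilsequence with a dilated-and-shifted copy of itself. The standard trick is: the pair $n \mapsto (g(n)\Gamma, g(qn+r)\Gamma)$ is a polynomial sequence on the product nilmanifold $(G \times G)/(\Gamma \times \Gamma)$, driven by a polynomial sequence $\tilde{g}$ taking values in a suitable filtered subgroup; one then invokes the factorisation theorem \cite[Theorem 1.19]{green-tao-nilratner} (as already used in Section~\ref{equi-sec}) to decompose $\tilde{g} = \eps \tilde{g}' \gamma$ and thereby either extract a genuine correlation bound or deduce that $g$ itself fails to be totally equidistributed, contradicting the hypothesis. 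Quantitatively one needs: if $(g(n)\Gamma)_{n \in [N]}$ is totally $\delta$-equidistributed then for all but $O(\delta^{-O(1)})$ many pairs $(q,r)$ the diagonal-orthogonal component of $F \otimes \bar F$ along $\tilde g$ is small, i.e. $|\E_n F(g(n)\Gamma)\overline{F(g(qn+r)\Gamma)}| \ll \delta^{c} \Vert F \Vert^2$ except on a sparse set of $(q,r)$; feeding this into the Type II sum after a Cauchy–Schwarz (to reduce the bilinear sum to such correlations) yields the claimed $\delta^c$ gain. The delicate points are (i) keeping track of the $Q$-rationality and Mal'cev-basis quantitative bounds through the passage to $G \times G$ and the factorisation, so that the final constant is $Q^{O(1)}$ and not worse, and (ii) handling the sparse exceptional set of $(q,r)$, which contributes a trivially bounded term that is nonetheless acceptable because the set is small.

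Finally, one assembles the pieces: choosing $\delta^{-c'}$ as the Type I/Type II cutoff and optimising, the Type I contribution, the Type II contribution, and the trivial contribution from small moduli and the exceptional set all total $\ll \delta^c Q \Vert F \Vert \log N$, which is the assertion of Proposition~\ref{equi-prop}. I expect essentially all the real work to be in the Type II analysis and in propagating quantitative control through the product nilmanifold; the Type I bound and the bookkeeping in Vaughan's identity should be routine given the companion paper's machinery.
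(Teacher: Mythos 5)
Your overall frame (Vaughan/Type~I--II reduction applied to $f(n)=1_P(n)F(g(n)\Gamma)$, and passing to the product nilmanifold $(G\times G)/(\Gamma\times\Gamma)$ in the Type~II case) matches the paper, but there is a genuine gap at the heart of the plan: the choice of cutoff. In Vaughan's identity the Type~I/Type~II thresholds must be powers of $N$ (the paper's Proposition~\ref{inverse-prop} takes $U=V=N^{1/3}$, so Type~I moduli run up to $N^{2/3}$ and the Type~II bilinear ranges satisfy $N^{1/3} \ll K \ll N^{2/3}$, $KW \asymp N$); you cannot take the cutoff $D$ to be a small power of $1/\delta$, because then one variable in the Type~II sums would be as short as $\delta^{-O(1)}$, and neither the diagonal terms after Cauchy--Schwarz nor the inner correlations over such short ranges can be controlled by the hypothesis, which only sees progressions of length at least $\delta N$. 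With the correct power-of-$N$ cutoff, your Type~I argument breaks down: for $k$ as large as $N^{2/3}$ the progression $\{n\in P : k\mid n\}$ has length about $N/k \le N^{2/3}$, which after the reduction $\delta \ge N^{-\sigma}$ is far shorter than $\delta N$, so total $\delta$-equidistribution does not apply directly and the Type~I case is not ``essentially immediate''.

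The missing mechanism, which is the real content of the paper's proof and is used in both cases, is a contradiction argument via the quantitative Leibman dichotomy of the companion paper \cite{green-tao-nilratner}: if for $\gg \delta^{O(c_1)}K$ values of $k$ the dilated sequences $n\mapsto g(k(b+ln))$ fail to be $\delta^{O(c_1)}$-equidistributed on their short ranges $[N_k]$, one obtains horizontal characters $\psi_k$ of size $\delta^{-O(c_1)}$ with small $C^\infty[N_k]$ norms along these sequences; after pigeonholing a single $\psi$ and writing $\psi\circ g(n)=\beta_d n^d+\dots+\beta_0$, one gets $\Vert qk^j\beta_j\Vert_{\R/\Z}\ll\delta^{-O(c_1)}(N/K)^{-j}$ for many $k\in(K,2K]$. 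The key upgrade is then the Waring-type Lemma~\ref{k-powers-lem} combined with the recurrence Lemma~\ref{strong-linear}, which converts this information at the dilated scales $k^j$ into $\Vert qq'\beta_j\Vert_{\R/\Z}\ll \delta^{-O(c_1)}N^{-j}$, i.e.\ diophantine information at scale $N$; one finally builds a mean-zero Lipschitz test function $\eta\circ(qq'\psi)$ whose average along $(g(n)\Gamma)_{n\in[N']}$ is $\ge 1$, contradicting the total $\delta$-equidistribution of the original sequence on $[N]$. Your Type~II sketch correctly proposes the product sequence $n\mapsto(g(kn),g(k'n))$ and a contradiction, but invoking the factorization theorem there and an exceptional set of pairs $(q,r)$ does not close the argument; the paper instead fixes one popular $k'$, subtracts the two character components, and lands in exactly the same diophantine situation as Type~I, which is resolved by the Waring-plus-recurrence step described above. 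Without that step neither case closes.
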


The proof of Proposition \ref{equi-prop} proceeds via the method of Type I/II sums, which is also known as the method of bilinear forms. This is the same method that one might use to tackle the ``minor arcs'' case of \eqref{davenport-est}, where $\alpha$ is not close to a rational with small denominator. We will describe it in detail in the next section. Our task for the remainder of this section is to reduce Theorem \ref{mainthm} to Proposition \ref{equi-prop}.

\emph{Proof that Proposition \ref{equi-prop} implies Theorem \ref{mainthm}.} We start with a brief overview. The main ingredient of this argument is \cite[Theorem 1.19]{green-tao-nilratner}, that is to say the factorization $g = \eps g' \gamma$ mentioned above. In addition to that we require estimates for sums of the type $\E_{n \in [N]} \mu(n)1_P(n)$, where $P \subseteq [N]$ is a progression. After standard harmonic analysis, such bounds ultimately depend on results about the zeros of $L$-functions $L(s,\chi)$, and as such this is analysis of the same type as would be used to establish the ``major arc'' cases of \eqref{davenport-est}. Finally, a fair amount of what might be called ``quantitative nil-linear algebra'' is required to keep track of the various nilmanifolds and Lipschitz functions involved in the argument. Here we draw repeatedly on the material assembled in \cite[Appendix A]{green-tao-nilratner} for this purpose; we encourage the reader to gloss over these essentially routine issues on a first reading. 

We now turn to the details.  We allow all implied constants to depend on $m$ and $d$.

Let the hypotheses be as in Theorem \ref{mainthm}. To simplify the notation slightly we will also assume that $\Vert F \Vert \geq 1$; the case $\Vert F\Vert < 1$ can easily be deduced from that case.  By dividing out by $\|F\|$ we may in fact normalize and assume that $\|F\|=1$.

We may of course take $A \geq 1$.  We may also assume that $Q \leq \log N$, since the claim is vacuously true otherwise; thus ${\mathcal X}$ is now a $\log N$-rational Mal'cev basis.  By increasing $A$ if necessary, it will suffice to show an estimate of the form
\begin{equation}\label{qn}
 |\E_{n \in [N]} \mu(n) F(g(n)\Gamma)| \ll_A \log^{-A+O(1)} N.
 \end{equation}
Let $B$ be a parameter (depending on $A$) to be specified later.  We may assume that $N$ is sufficiently large depending on $A, B$.  By \cite[Theorem 1.19]{green-tao-nilratner} (with $M_0 := \log N$) we can find an integer $M$,
\[ \log N \leq M \ll \log^{O_B(1)} N,\]
a rational subgroup $G' \subseteq G$, a Mal'cev basis $\mathcal{X}'$ for $G'/\Gamma'$ (where $\Gamma' := G \cap \Gamma$) in which each element is an $M$-rational combination (see \cite[Definition 1.21]{green-tao-nilratner}) of the elements of $\mathcal{X}$, and a decomposition 
\begin{equation}\label{decomp}
g = \eps g' \gamma
\end{equation}
into polynomial sequences $\eps,g',\gamma \in \poly(\Z,G_{\bullet})$ with the following properties:

\begin{enumerate}
\item $\eps : \Z \rightarrow G_{\bullet}$ is $(M,N)$-smooth (see \cite[Definition 1.22]{green-tao-nilratner} for a definition);
\item $g' : \Z \rightarrow G'$ takes values in $G'$, and the finite sequence $(g'(n)\Gamma')_{n \in [N]}$ is totally $M^{-B}$-equidistributed in $G'/\Gamma'$, using the metric $d_{\mathcal{X}'}$ on $G'/\Gamma'$;
\item $\gamma : \Z\rightarrow G$ is $M$-rational (see \cite[Definition 1.21]{green-tao-nilratner}), and $(\gamma(n)\Gamma)_{n \in \Z}$ is periodic with period $1 \leq q \leq M$.
\end{enumerate}

From \eqref{decomp} we have
\begin{equation}\label{eq1} \E_{n \in [N]} \mu(n) F(g(n)\Gamma) = \E_{n \in [N]} \mu(n)F(\eps(n)g'(n)\gamma(n)\Gamma).\end{equation}
The sequence $(\gamma(n)\Gamma)_{n \in \Z}$ is periodic with some period $q$, $1 \leq q \leq M$. For each $j = 0,1,\dots,q-1$ let $\gamma_j := \{\gamma(j)\}$ be the fractional part of $\gamma(j)$ with respect to $\Gamma$, thus $\gamma_j\Gamma = \gamma(j)\Gamma$ and all the coordinates $\psi_{\mathcal{X}}(\gamma_j)$ lie in $[0,1)$. This construction is described in \cite[Lemma A.14]{green-tao-nilratner}. 

Now by \cite[Lemma A.12]{green-tao-nilratner}, the coordinates $\psi_{\mathcal{X}}(\gamma(j))$ lie in $\frac{1}{M'}\Z^m$ for some $M' \ll M^{O(1)}$. Since $\gamma_j = \gamma(j)\eta$ for some $\eta$ with integer coordinates, it follows from \cite[Lemma A.3]{green-tao-nilratner} that the coordinates $\psi_{\mathcal{X}}(\gamma_j)$ are rationals with height $\ll M^{O(1)}$.

We now take advantage of the periodicity of $\gamma(n) \Gamma$ to split the right-hand side of \eqref{eq1} as
\begin{equation}\label{eq2} \sum_{j = 0}^{q-1} \E_{n \in [N]} \mu(n) 1_{n \equiv j \mdsub q} F(\eps(n) g'(n)\gamma_j \Gamma);\end{equation}
By the right-invariance of $d$, the $(M,N)$-smoothness of $\eps$ (see \cite[Definition 1.21]{green-tao-nilratner}) and the $1$-Lipschitz bound on $F$ we see that 
\begin{align*} |F(\eps(n)g'(n)\gamma_j\Gamma) - F(\eps(n_0)g'(n)\gamma_j\Gamma)|  & \leq d_{\mathcal{X}}(\eps(n)g'(n)\gamma_j,\eps(n_0)g'(n)\gamma_j) \\ & = d_{\mathcal{X}}(\eps(n_0),\eps(n)) \\ & \leq \log^{-A} N.\end{align*}
whenever if $|n - n_0| \leq \frac{N}{M\log^A N}$.
Hence if we split each progression $n \equiv j\md{q}$ into further progressions $P_{j,k}$ for $k = O( M \log^A N )$, each having diameter at most $\frac{N}{M\log^A N}$, we see that \eqref{eq2} is equal to
\begin{equation}\label{eq3}
\sum_{j,k} \E_{n \in [N]} \mu(n) 1_{P_{j,k}}(n) F(a_{j,k} g'(n) \gamma_j \Gamma) + O(\log^{-A} N).
\end{equation}
Here each $a_{j,k} := \eps(n_{0,j,k})$ for some $n_{0,j,k} \in P_{j,k}$; by the definition of what it means for $\eps : \Z \rightarrow G$ to be $(M,N)$-smooth (i.e. \cite[Definition 1.21]{green-tao-nilratner}), it follows that $d_{\mathcal{X}}(a_{j,k}, \id_G) \leq M$ and hence, by \cite[Lemma A.4]{green-tao-nilratner}, that
\begin{equation}\label{ajk-bound} |\psi_{\mathcal{X}}(a_{j,k})| \ll M^{O(1)}.\end{equation}
If $N$ is sufficiently large depending on $A$ and $B$ then $N \geq 10M\log^A N$ (say), and this partition of $[N]$ may be arranged in such a way that  
\[ |P_{j,k}| \geq \frac{N}{2qM\log^A N} \geq \frac{N}{2M^2\log^A N}.\]

Since the number of $j$ is at most $M$, and the number of $k$ is at most $M \log^A N$, we thus see that to show \eqref{qn} it suffices by the triangle inequality to show that
\begin{equation}\label{3b}
|\E_{n \in [N]} \mu(n) 1_{P_{j,k}}(n) F(a_{j,k} g'(n) \gamma_j \Gamma)| \ll_A M^{-2} \log^{-2A+O(1)} N
\end{equation}
 for each $j,k$.
 
Fix $j,k$. Write $H_j := \gamma_j^{-1} G' \gamma_j$ and let $g_j : \Z \rightarrow H_j$ be the sequence defined by $g_j(n) := \gamma_j^{-1} g'(n) \gamma_j$.  It is clear that each $g_j$ is a polynomial sequence with coefficients in the filtration $(H_j)_{\bullet} := \gamma_j^{-1} G'_{\bullet} \gamma_j$. 

Set $\Lambda_j := \Gamma \cap H_j$ and define functions 
\[ F_{j,k} : H_j/\Lambda_j \rightarrow [-1,1]\]
by the formula
\[ F_{j,k}(x\Lambda_j) := F(a_{j,k}\gamma_j x\Gamma).\]
Then \eqref{3b} can be rewritten as
\begin{equation}\label{eq577}
|\E_{n \in [N]} \mu(n) 1_{P_{j,k}}(n)F_{j,k}(g_j(n)\Lambda_j)| \ll_A M^{-2} \log^{-2A+O(1)} N.
\end{equation}

Suppose for the moment that $F_{j,k}$ were a constant function.  Recall that $P_{j,k}$ has common difference $q \leq M$.  We may thus apply Proposition \ref{mob-period} (with $A$ replaced by a sufficiently large exponent $A'$ depending on $A$ and $B$) to obtain the desired claim, since $M \ll \log^{O_B(1)} N$.  Therefore we may subtract off the mean of $F_{j,k}$ and assume without loss of generality that $\int_{H_j/\Lambda_j} F_{j,k} = 0$. This may cause $F_{j,k}$ to take values in $[-2,2]$ rather than $[-1,1]$, but we can easily counter this trivial issue by dividing $F_{j,k}$ by two.

In a moment we shall use Proposition \ref{equi-prop} to estimate the terms appearing here. Before doing that we record quantitative rationality properties of the nilmanifold $H_j/\Lambda_j$, as well as a Lipschitz bound on $\Vert F_{j,k} \Vert$. 

\emph{Claim.} There is a Mal'cev basis $\mathcal{Y}_j$ for $H_j/\Lambda_j$ adapted to the filtration $(H_j)_{\bullet}$ such that each $\mathcal{Y}_j$ is an $M^C$-rational combination of the $X_i$. With respect to the metric $d_{\mathcal{Y}_j}$ on $H_j/\Lambda_j$ induced by this basis, the polynomial sequence $g_j \in \poly(\Z,(H_j)_{\bullet})$ is $M^{- cB+O(1)}$-totally equidistributed for some $c>0$ depending only on $m,d$, and we have $\Vert F_{j,k} \Vert \leq M^{O(1)}$.

\begin{proof}
We shall apply suitable combinations of the lemmas in \cite[Appendix A]{green-tao-nilratner}. The existence of $\mathcal{Y}_j$ follows from Proposition A.9 and Lemma A.13 of \cite{green-tao-nilratner} together with the fact that each $\gamma_j$ has rational coordinates with height $M^{O(1)}$. Now the map $x \mapsto F(a_{j,k}\gamma_j x \Gamma)$ on $G/\Gamma$ has Lipschitz constant at most $M^{O(1)}$ by \cite[Lemma A.5]{green-tao-nilratner} and the bounds $|\psi_{\mathcal{X}}(a_{j,k})|, |\psi_{\mathcal{X}}(\gamma_j)| \leq M^{O(1)}$. The final statement of the claim, and the statement about the quantitative equidistribution of $g_j$, now follow from \cite[Lemma A.17]{green-tao-nilratner}.
\end{proof}

Let us now apply Proposition \ref{equi-prop} to \eqref{eq577}. We apply the proposition with parameters (which we distinguish using tildes) as follows: $\tilde G := H_j$, $\tilde \Gamma := \Lambda_j$, $\tilde G_{\bullet} := (H_j)_{\bullet}$, $\tilde g := g_j$, $\tilde X := \mathcal{Y}_j$, $\tilde Q := M^{O(1)}$, $\tilde F := F_{j,k}$ and $\tilde \delta := M^{- cB+O(1)}$.  We quickly see that \eqref{eq577} is bounded by $O\left(M^{-cB+O(1)} \log^{O(A)} N\right)$.
Choosing $B$ sufficiently large depending on $A$, we obtain \eqref{eq577} as claimed.\endproof

\section{The equidistributed case: Type I and II sums}

In this section we establish Proposition \ref{equi-prop} using Vinogradov's method of Type I and II sums in the form due to Vaughan \cite{vaughan}.  More precisely, we will use the following proposition.

\begin{proposition}[Method of Type I/II sums]\label{inverse-prop}  
Let
$f: \N \to \C$ be a function with $\Vert f \Vert_{\infty} \leq 1$ such that
\[ |\E_{N < n \leq 2N} \mu(n) \overline{f(n)}| \geq \eps\]
for some $\eps > 0$.  Then one of the following statements holds:
\begin{itemize}
\item \textup{(Type I sum is large)} There exists an integer $1 \leq K \leq N^{2/3}$ such that
\begin{equation}\label{typeI}
|\E_{N/k < w \leq 2N/k} f(kw)| \gg (\eps/\log N)^{O(1)}
\end{equation}
for $\gg (\eps/\log N)^{O(1)}K$ integers $k$ such that $K < k \leq 2K$.\vspace{11pt}

\item \textup{(Type II sum is large)} There exist integers $K,W$ with $\frac{1}{2}N^{1/3} \leq K \leq 4N^{2/3}$ and $N/4 \leq KW \leq 4N$, such that
\begin{equation}\label{typeII}
 |\E_{K < k,k' \leq 2K} \E_{W \leq w,w' < 2W} f(kw) \overline{f(k'w)}\overline{f(kw')}f(k'w')| \gg (\eps/\log N)^{O(1)}.
 \end{equation}
\end{itemize}
\end{proposition}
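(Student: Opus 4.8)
\emph{Proof proposal.} The plan is to run the classical method of Type~I and Type~II (bilinear) sums: feed a Vaughan-type identity for $\mu$ into the sum, decompose all the resulting variables dyadically, and then read off the two alternatives by pigeonholing and by Cauchy--Schwarz. Throughout write $m \sim M$ for $M < m \le 2M$. One iteration of the trivial identity $\frac{1}{\zeta(s)} = F_z(s) + \big(1 - \zeta(s)F_z(s)\big)\frac{1}{\zeta(s)}$, where $z := N^{1/3}$ and $F_z(s) := \sum_{a \le z}\mu(a)a^{-s}$, produces at the level of Dirichlet-series coefficients the identity
\[
\mu = \mu_{\le z} + r_z * \mu_{\le z} + r_z * r_z * \mu ,
\]
where $\mu_{\le z}(n) := \mu(n) 1_{n \le z}$ and $r_z$ is the coefficient sequence of $1 - \zeta F_z$, which is supported on integers $> z$ and satisfies $|r_z(n)| \le \tau(n)$. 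For $n \in (N,2N]$ the first term vanishes. Multiplying by $\overline{f(n)}$, summing over $n \in (N,2N]$, and breaking each summation variable in the two convolutions into dyadic ranges, one writes $\E_{N < n \le 2N}\mu(n)\overline{f(n)}$ as a sum of $O(\log^{O(1)}N)$ pieces, each of one of two shapes: a \emph{Type~I} sum $\sum_{k \sim K} a_k \sum_{w \sim N/k}\overline{f(kw)}$, where $a_k$ is an $f$-independent coefficient bounded by $\tau(k)^{O(1)}$ and $1 \le K \le z^2 = N^{2/3}$ (this is where $r_z * \mu_{\le z}$ lands, after absorbing its two short divisors into the single variable $k$), or a \emph{Type~II} sum $\sum_{k \sim K}\sum_{w \sim W} a_k b_w \overline{f(kw)}$ with $a_k, b_w$ bounded by $\tau(\cdot)^{O(1)}$, $N/4 \le KW \le 4N$, and $\tfrac12 N^{1/3} \le K \le 4N^{2/3}$ (this is where $r_z * r_z * \mu$ lands, after pairing its short divisor $c < 2N^{1/3}$ with one of the two long divisors to form $w$; the explicit constants come from dyadic rounding and the factor-of-two window $(N,2N]$). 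Since the full sum has modulus $\ge \eps$, one of these $O(\log^{O(1)}N)$ pieces has modulus $\gg (\eps/\log N)^{O(1)}$, which places us in one of the two cases.

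In the Type~I case, the divisor bound gives $\sum_{k \sim K}|a_k| \ll K\log^{O(1)}N$, while $|\sum_{w} f(kw)| \le N/k$ trivially. Hence the pigeonhole forces $|\E_{N/k < w \le 2N/k} f(kw)| \gg (\eps/\log N)^{O(1)}$ on a set $\mathcal K \subseteq (K,2K]$ for which $\sum_{k \in \mathcal K}|a_k| \gg (\eps/\log N)^{O(1)}K$; one Cauchy--Schwarz, using also $\sum_{k \sim K}|a_k|^2 \ll K\log^{O(1)}N$, then upgrades this to $|\mathcal K| \gg (\eps/\log N)^{O(1)}K$, which is exactly \eqref{typeI}.

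In the Type~II case I would apply Cauchy--Schwarz twice. First in the variable $k$: this removes the weight $a_k$ at the cost of a factor $\sum_{k \sim K}|a_k|^2 \ll K\log^{O(1)}N$, and expanding the square yields $\sum_{w,w' \sim W} b_w\overline{b_{w'}}\sum_{k \sim K} f(kw)\overline{f(kw')}$. Then in the pair $(w,w')$: this removes $b_w\overline{b_{w'}}$ at the cost of $\big(\sum_{w \sim W}|b_w|^2\big)^2 \ll W^2\log^{O(1)}N$, leaving $\sum_{w,w' \sim W}\big|\sum_{k \sim K}f(kw)\overline{f(kw')}\big|^2 = \sum_{k,k' \sim K}\sum_{w,w' \sim W} f(kw)\overline{f(k'w)}\overline{f(kw')}f(k'w')$, whose normalisation is the average in \eqref{typeII}. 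Since $KW \asymp N$ with $N^{1/3} \ll K,W \ll N^{2/3}$, the coefficient losses cost only $\log^{O(1)}N$ and are absorbed into $(\eps/\log N)^{O(1)}$, so we recover \eqref{typeII}.

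The only external inputs are the divisor moment bounds $\sum_{n \le X}\tau(n)^{O(1)} \ll X\log^{O(1)}X$. The step needing the most care is the combinatorial bookkeeping in the first paragraph: one must choose the Vaughan cutoff (the balanced choice $U=V=z=N^{1/3}$) and regroup the three convolution variables of the $r_z * r_z * \mu$ term so that every dyadic piece really lands in the claimed ranges — in particular, that the genuinely bilinear term, which has no divisor individually forced to be short, produces a Type~II sum with \emph{both} $K$ and $W$ between $N^{1/3}$ and $N^{2/3}$ and with $KW \asymp N$. A second mildly delicate point is the localisation $n \in (N,2N]$, which for each $k$ confines $w$ to a subinterval depending on $k$; this is harmless, being removable by a standard completion-of-sums argument at a further cost of $\log^{O(1)}N$ (alternatively, one may simply note that only the values of $f$ on $(N,2N]$ are ever used). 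The double Cauchy--Schwarz of the third paragraph is then entirely routine.
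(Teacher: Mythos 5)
Your proposal is correct and follows essentially the paper's route: the paper disposes of this proposition simply by citing \cite[Proposition 4.2]{green-tao-u3mobius} specialised to $U=V=N^{1/3}$, and that cited result is proved by exactly the argument you give (a Vaughan-type identity with cutoff $N^{1/3}$, dyadic decomposition and pigeonholing into Type I and Type II pieces, then a double Cauchy--Schwarz for the bilinear piece). One small caution: of your two suggested fixes for the constraint $kw\in(N,2N]$ in the Type II piece, only the completion-of-sums device is adequate as stated---absorbing the cutoff into $f$ would alter the function appearing in the conclusion \eqref{typeII}---but completion is the standard move and costs only the $\log^{O(1)}N$ factors you already allow.
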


\begin{proof}  This is \cite[Proposition 4.2]{green-tao-u3mobius}, specialised to the case $U = V = N^{1/3}$, and with certain explicit exponents replaced by unspecified constants $O(1)$.
\end{proof}

We now begin the proof of Proposition \ref{equi-prop}. As before we may normalise so that $\|F\| = 1$.  From this and the mean zero assumption, we see in particular that 
\begin{equation}\label{fx}
|F(x)| \leq \hbox{diam}(G/\Gamma) \ll Q^{O(1)}
\end{equation}
for all $x \in G/\Gamma$ (the diameter bound here is \cite[Lemma A.16]{green-tao-nilratner}).

If $\delta \leq 1/N$ then by \eqref{equi} we have $|F(g(n)\Gamma)| \leq \delta$ for all $n \in [N]$, and the claim is trivial, so we may assume that $\delta > 1/N$.  By increasing $\delta$ if necessary (and shrinking $c$) we thus see that we may assume that 
\begin{equation}\label{deltabeta}
\delta > N^{-\sigma}
\end{equation}
for any fixed small constant $\sigma > 0$ depending only on $m,d$.  

The basic idea, which will become clearer upon reading the details, is to make good use of the fact that one may test the quantitative equidistribution properties of a polynomial nilsequence on $G/\Gamma$ by passing to the abelianisation $(G/\Gamma)_{\ab}$, a phenomenon referred to in \cite[Theorem 2.9]{green-tao-nilratner} as the ``quantitative Leibman Dichotomy'' (cf. \cite{leibman-single-poly}). The abelian issues that one must then deal with are of a very similar nature to those involved in dealing with exponential sums such as $\E_{n \in [N]} \mu(n) e(p(n))$, where $p : \R \rightarrow \R/\Z$ is an ordinary polynomial. Rather than quote results from the existing literature on this problem it is easier for us to invoke various lemmas from \cite{green-tao-nilratner}, which were stated and proved in a language which is helpful for the present paper.

Let $\eps := \delta^{c_1} Q\log N$, for a constant $c_1$ to be specified later. We may assume that $\eps < 1$, otherwise the claim is trivial from \eqref{fx} and the triangle inequality.  In particular, we have
$$ Q, \log N \leq \delta^{-c_1}$$
and we will use these estimates frequently in the sequel to absorb any polynomial factors in $Q$ or $\log N$ into a power of $\delta^{-c_1}$.  

Suppose for contradiction that Proposition \ref{equi-prop} failed for these parameters.  We then apply Proposition \ref{inverse-prop} with $f(n) := 1_P(n) F(g(n)\Gamma)$ and $\eps$ as above, concluding that either \eqref{typeI} or \eqref{typeII} holds. We deal with these two cases in turn.

\emph{The Type I case.} Suppose that \eqref{typeI} holds. Thus there are $\gg \delta^{O(c_1)} K$ values of $k \in (K,2K]$ such that 
\[ |\E_{N/k < w \leq 2N/k} 1_P(kw)F(g(kw)\Gamma)| \gg \delta^{O(c_1)}.\]
Let $l$ denote the common difference of $P$; since $|P| \geq N/Q$, we must have $1 \leq l \leq Q$. Splitting into progressions with common difference $l$, we see that for some $b \md{l}$ and for $\gg \delta^{O(c_1)} K$ values of $k \in (K,2K]$ we have
\[ |\sum_{\substack{N/k < w \leq 2N/k \\ w \equiv b \mdsub{l} }} 1_P(kw)F(g(kw)\Gamma)| \gg \delta^{O(c_1)} \frac{N}{kl}.   \]
Setting $w = b + lw'$, this may be rewritten as
\begin{equation}\label{eq447bb} \left|\sum_{w' \in I_k} F(g(k(b + lw')\Gamma)\right| \gg \delta^{O(c_1)} \frac{N}{kl},\end{equation}
where $I_k \subseteq [\frac{N}{2kl} - 1, \frac{N}{kl}]$ is an interval.

For each value of $k$ for which this holds, consider the sequence $g_k : \Z \rightarrow G$ defined by $g_k(n) := g(kn)$ and also the sequence $\tilde g_k : \Z \rightarrow G$ defined by $\tilde g_k(n) = g(k(b + ln))$. It follows from \cite[Corollary 6.8]{green-tao-nilratner} that $g_k,\tilde g_k \in \poly(\Z,G_{\bullet})$. Now \eqref{eq447bb} implies that $(\tilde g_k(n)\Gamma)_{n \in [N_k]}$ fails to be $\delta^{O(c_1)}$-equidistributed in $G/\Gamma$, where $N_k \sim N/kl$.

It follows from \cite[Theorem 2.9]{green-tao-nilratner} that there is a nontrivial horizontal character $\psi_k : G \rightarrow \R/\Z$ (i.e. a continuous homomorphism from $G$ to $\R/\Z$ which annihilates $\Gamma$) with magnitude $|\psi_k| \ll \delta^{-O(c_1)}$ such that 
\[ \Vert \psi_k \circ \tilde g_k \Vert_{C^{\infty}[N_k]} \ll \delta^{-O(c_1)}.\]
Recall from \cite[Definition 2.10]{green-tao-nilratner} that the $C^\infty[N]$-norm of a polynomial $p : \Z \rightarrow \R/\Z$ expanded in binomial coefficients as
\begin{equation}\label{expand}
 p(n) = \alpha_0 + \alpha_1 \binom{n}{1} + \dots + \alpha_d \binom{n}{d},
 \end{equation}
is defined by
\[ \Vert p \Vert_{C^{\infty}[N]} := \sup_{1 \leq j \leq d} N^{j} \Vert \alpha_j \Vert_{\R/\Z}.\] 

By \cite[Lemma 8.4]{green-tao-nilratner} (specialised to the single-parameter case $t = 1$), there is some $q_k \ll \delta^{-O(c_1)}$ such that 
\[ \Vert q_k \psi_k \circ g_k \Vert_{C^{\infty}[N_k]} \ll \delta^{-O(c_1)}.\]

Pigeonholing in the possible choices of $q_k\psi_k$, we may find some $\psi$ with $0 < |\psi| \ll \delta^{-O(c_1)}$ such that 
\begin{equation}\label{psij}
 \Vert \psi \circ g_k \Vert_{C^{\infty}[N_k]} \ll \delta^{-O(c_1)}
\end{equation}
for $\gg \delta^{O(c_1)} K$ values of $k \in (K,2K]$.

Write
\begin{equation}\label{explicit-7} \psi \circ g(n) = \beta_d n^d + \dots + \beta_0.\end{equation}
Then
\begin{equation}\label{psik}
\psi \circ g_k(n) = \beta_d k^d n^d + \dots + \beta_0.
\end{equation}
We would like to use this and \eqref{psij} to conclude that the coefficients $k^j \beta_j$ are close to being integer (or rational with small denominator). This will follow from a simple lemma.

\begin{lemma}\label{smooth-coeff} Suppose that $p : \Z \rightarrow \R/\Z$ is a polynomial of the form $p(n) = \beta_d n^d + \dots + \beta_0$. Then there is some $q \geq 1$, $q = O(1)$, such that $\Vert q\beta_j \Vert_{\R/\Z} \ll N^{-j}\Vert p \Vert_{C^{\infty}[N]}$ for $j = 1,\dots, d$.
\end{lemma}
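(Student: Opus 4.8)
The plan is to pass between the monomial basis $\{n^j\}_{j=0}^d$ and the binomial-coefficient basis $\{\binom{n}{j}\}_{j=0}^d$ of the space of polynomials of degree at most $d$, exploiting the fact that this change of basis becomes integral, with $O(1)$ entries, once one multiplies through by $d!$. Recall that $\Vert p \Vert_{C^\infty[N]} = \sup_{1 \leq j \leq d} N^j \Vert \alpha_j \Vert_{\R/\Z}$, where $p(n) = \sum_{j=0}^d \alpha_j \binom{n}{j}$ is the (unique) binomial expansion \eqref{expand}. So it suffices to take $q := d!$ and to express $q\beta_i$, for each $1 \leq i \leq d$, as an integer combination with $O(1)$ coefficients of $\alpha_i, \dots, \alpha_d$ alone.

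First I would dispose of a harmless ambiguity: the representation $p(n) = \beta_d n^d + \dots + \beta_0$ of an $\R/\Z$-valued polynomial is not unique, so I choose arbitrary real lifts $\tilde\beta_j \in \R$ of the $\beta_j$ and work with the genuine real polynomial $\tilde p(n) := \sum_j \tilde\beta_j n^j$, which satisfies $\tilde p \equiv p \pmod 1$ on $\Z$. Expanding $\tilde p(n) = \sum_j \tilde\alpha_j \binom{n}{j}$ over $\R$, the integrality of the $\binom{n}{j}$ on $\Z$ forces $\tilde\alpha_j \equiv \alpha_j \pmod 1$, hence $\Vert \tilde\alpha_j \Vert_{\R/\Z} = \Vert \alpha_j \Vert_{\R/\Z}$; and $\Vert q\beta_i \Vert_{\R/\Z} = \Vert q\tilde\beta_i \Vert_{\R/\Z}$, so nothing is lost by passing to $\tilde p$.

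The substantive point is the elementary observation that for $0 \leq k \leq d$ the polynomial $k!\binom{n}{k} = n(n-1)\cdots(n-k+1)$ has integer coefficients, all of size $O(1)$ (they are, up to sign, elementary symmetric functions of $\{1,\dots,k-1\}$); since $k! \mid d!$, the polynomial $q\binom{n}{k} = \tfrac{d!}{k!}\cdot k!\binom{n}{k}$ likewise has integer coefficients of size $O(1)$. Reading off the coefficient of $n^i$ in the identity $q\,\tilde p(n) = \sum_{k=0}^d \tilde\alpha_k\big(q\binom{n}{k}\big)$ gives $q\tilde\beta_i = \sum_{k=i}^d c_{i,k}\tilde\alpha_k$ with $c_{i,k} \in \Z$ and $|c_{i,k}| = O(1)$ (the sum starts at $k=i$ since $\binom{n}{k}$ has degree $k$). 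Hence, for each $1 \leq i \leq d$,
\[ \Vert q\beta_i \Vert_{\R/\Z} \leq \sum_{k=i}^d |c_{i,k}|\,\Vert \alpha_k \Vert_{\R/\Z} \ll \max_{i \leq k \leq d} \Vert \alpha_k \Vert_{\R/\Z} \leq \max_{i \leq k \leq d} N^{-k}\Vert p \Vert_{C^\infty[N]} \leq N^{-i} \Vert p \Vert_{C^\infty[N]}, \]
the final step using $N \geq 1$. This is the lemma, with $q = d! = O(1)$ (recall all constants may depend on $m$ and $d$).

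I do not expect a genuine obstacle here: the whole argument is finite-dimensional linear algebra on polynomials of degree $\leq d$. The only things to watch are the non-uniqueness of the monomial representation, handled by the real lift above, and the fact that $\alpha_0$ never enters the formula for $q\beta_i$ when $i \geq 1$ — consistent with the statement, which therefore needs no control on the constant term of $p$. (If one preferred an explicit formula, one could instead invoke $\binom{n}{k} = \tfrac{1}{k!}\sum_{i \leq k} s(k,i)\,n^i$ with $s(k,i)$ the signed Stirling numbers of the first kind, and take the same $q = d!$.)
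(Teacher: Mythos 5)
Your argument is correct and is essentially the paper's own proof, just written out in full: the paper likewise observes that each $\beta_j$ is a rational combination of $\alpha_j,\dots,\alpha_d$ with coefficients of height $O(1)$ and then clears denominators, which is exactly your choice $q = d!$ together with the integrality of the coefficients $c_{i,k}$ (needed, as you note, so that the triangle inequality on $\R/\Z$ applies). Your extra care about the non-uniqueness of the monomial representation and the absence of $\alpha_0$ is a sound elaboration of the same route, not a different one.
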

\proof Consider the representation \eqref{expand} which is used to define the $C^{\infty}[N]$-norm. Observing that $\beta_j$ can be written as a linear combination of $\alpha_j,\dots,\alpha_d$ with rational coefficients of height $O(1)$, the result follows upon clearing denominators.\endproof
 
From \eqref{psij}, \eqref{psik} and Lemma \ref{smooth-coeff} we see that there is some $q \geq 1$, $q = O(1)$, such that 
\begin{equation}\label{qkj}
\Vert qk^j\beta_j\Vert_{\R/\Z} \ll \delta^{-O(c_1)} (N/K)^{-j}
\end{equation}
for $j = 1,2,\dots,d$ and for at least $\delta^{O(c_1)} K$ values of $k \in (K,2K]$.

Fix $j$, $1 \leq j \leq d$. To pass from the $j^{th}$ powers $k^j$ to more general integers we shall need the following Waring-type result.

\begin{lemma}\label{k-powers-lem}
Let $K \geq 1$ be an integer, and suppose that $S \subseteq [K]$ is a set of size $\alpha K$. Suppose that $t \geq 2^j + 1$. Then $\gg_{j,t} \alpha^{2t} K^j$ integers in the interval $[tK^j]$ can be written in the form $k_1^j + \dots + k_t^j$, $k_1,\dots,k_t \in S$.
\end{lemma}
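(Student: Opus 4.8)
The plan is to count representations $n = k_1^j + \dots + k_t^j$ with each $k_i \in S$ using a second-moment (Cauchy--Schwarz) argument on the representation function. Define $r(n) := \#\{(k_1,\dots,k_t) \in S^t : k_1^j + \dots + k_t^j = n\}$, supported on the interval $[tK^j]$. The total number of tuples is $\sum_n r(n) = |S|^t = \alpha^t K^t$. If we can show the second moment bound $\sum_n r(n)^2 \ll_{j,t} K^{2t-j}$, then by Cauchy--Schwarz the number of $n$ with $r(n) > 0$ is at least $(\sum_n r(n))^2 / \sum_n r(n)^2 \gg_{j,t} \alpha^{2t} K^j$, which is exactly the claim. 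So everything reduces to the second-moment estimate.

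The quantity $\sum_n r(n)^2$ counts solutions to $k_1^j + \dots + k_t^j = k_1'^j + \dots + k_t'^j$ with all $2t$ variables in $[K]$; bounding this by the same count with variables ranging over all of $[K]$, we need $J_{t,j}(K) := \#\{k_1^j + \dots + k_t^j = k_1'^j + \dots + k_t'^j : k_i, k_i' \in [K]\} \ll_{j,t} K^{2t-j}$. This is precisely a Vinogradov-type mean value bound for a single monomial $k^j$, and the hypothesis $t \geq 2^j + 1$ is exactly the range in which the classical Hua-type bound $\int_0^1 |\sum_{k \leq K} e(\alpha k^j)|^{2t}\, d\alpha \ll_{j,t} K^{2t-j}$ holds: indeed for $2t \geq 2^j$ variables, Hua's inequality gives $\ll K^{2t - j + \eps}$, and the extra variable (the $+1$) absorbs the $K^\eps$, or alternatively one invokes the now-available sharp Vinogradov mean value theorem. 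I would cite Hua's inequality (e.g.\ from Vaughan's book \cite{vaughan}, or any standard reference on the Waring--Goldbach circle method) for this input rather than reprove it; the clean statement is that the number of solutions to the above Diophantine system is $O_{j,t}(K^{2t-j})$ whenever $2t > 2^j$, equivalently $t \geq 2^{j-1} + 1$, which is implied by $t \geq 2^j + 1$.

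The main obstacle is purely bookkeeping: making sure the mean-value input is quoted in a form that genuinely gives $K^{2t-j}$ with no $\eps$ loss in the stated range of $t$, since a bound of $K^{2t-j+\eps}$ would only yield $\alpha^{2t} K^{j-\eps}$ rather than the clean $\gg_{j,t}\alpha^{2t} K^j$. The honest cleanest fix is to use one spare variable: with $t' := t-1 \geq 2^j$ variables one has $J_{t',j}(K) \ll_{j,t} K^{2t'-j}\log^{O(1)} K$ from Hua (or $\ll K^{2t'-j}$ from Vinogradov), and then $J_{t,j}(K) = \sum_{m} \big(\#\{k_1^j+\dots+k_{t-1}^j - k_1'^j - \dots - k_{t-1}'^j = m\}\big) \cdot \#\{k^j - k'^j = -m\} \cdot (\dots)$ — more simply, $J_{t,j}(K) \leq J_{t-1,j}(K) \cdot \sup_m \#\{(k,k'): k^j - k'^j = m\}$ is too lossy, so instead I would just directly invoke the sharp mean value theorem (Bourgain--Demeter--Guth / Wooley) which gives $J_{s,j}(K) \ll_{s,j} K^{2s-j}$ for all $s \geq j(j+1)/2$, and note $2^j + 1 \geq j(j+1)/2$, so the hypothesis suffices. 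Either route works; the essential content is the uncountably classical fact that $k \mapsto k^j$ has few additive coincidences once there are enough summands, and the proof of the lemma is a one-line Cauchy--Schwarz given that input.
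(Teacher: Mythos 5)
Your proof is correct and its skeleton — Cauchy--Schwarz applied to the representation function $r(n)$, reducing everything to the second-moment bound $\sum_n r(n)^2 \ll_{j,t} K^{2t-j}$ — is exactly the paper's. The difference is in the arithmetic input used for that second moment. The paper does not go through mean values at all: it quotes the uniform-in-$M$ bound $\#\{(x_1,\dots,x_t)\in[K]^t : x_1^j+\cdots+x_t^j=M\} \ll_{j,t} K^{t-j}$, which is a consequence of the Hardy--Littlewood asymptotic formula for Waring's problem and is precisely where the hypothesis $t \geq 2^j+1$ (Hua's classical threshold) enters; then $\sum_n r(n)^2 \leq \big(\sup_M R(M)\big)\sum_n r(n) \ll K^{t-j}\cdot K^{t}$, with no $\eps$-loss ever arising. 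You instead bound $\sum_n r(n)^2$ by the complete mean value $J_{t,j}(K)$ and appeal to Hua's inequality or Bourgain--Demeter--Guth/Wooley. This route works, but note two imprecisions you would need to tidy: Hua's inequality carries the $K^\eps$ you worry about, and the sharp Vinogradov mean value theorem is a statement about the full system $\sum_i x_i^h=\sum_i y_i^h$ for $1\leq h\leq j$, so the single-equation bound $J_{s,j}(K)\ll K^{2s-j}$ requires the standard transference and, at the critical index $s=j(j+1)/2$, still loses an $\eps$; since $t\geq 2^j+1$ strictly exceeds $j(j+1)/2$ the spare variables do let you remove it (e.g.\ by a major/minor arc argument with one variable to spare), but this is heavier machinery than needed. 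The paper's pointwise representation bound is the more economical citation, gives the clean $K^{2t-j}$ immediately, and explains why $2^j+1$ is the natural hypothesis.
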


\proof It is a well-known consequence of Hardy and Littlewood's asymptotic formula for Waring's problem (see e.g. \cite{vaughan-book}) that the number of solutions to
\[ x_1^j + \dots + x_t^j = M, \qquad x_1,\dots x_t \in [K]\]
is $\ll_{j,t} K^{t-j}$ uniformly in $M$ provided that $t \geq 2^j + 1$. (In fact, by subsequent work, such a result is known for much smaller values of $t$ when $j$ is large.)
Let $X = \{k^j : k \in S\}$ and let $r(n)$ be the number of representations of $n$ as the sum of $t$ elements of $X$. Then by the Cauchy-Schwarz inequality and the preceding remarks we have
\[
\alpha^{2t} K^{2t}  = (\sum_n r(n))^2 \leq |tX| \sum_n r(n)^2  \ll_j |tX| K^{2t - j},
\]
which implies the result.\endproof

By \eqref{qkj} and Lemma \ref{k-powers-lem} it follows that 
\[ \Vert q l \beta_j \Vert_{\R/\Z} \ll \delta^{-O(c_1)} (K/N)^j\] 
for $\gg \delta^{O(c_1)} K^j$ values of $l \in [10^d K^j]$.

The following lemma, which is \cite[Lemma 3.2]{green-tao-nilratner}, may be applied to this situation.

\begin{lemma}[Strongly recurrent linear functions are highly non-diophantine]\label{strong-linear}
Let $\alpha \in \R$, $0 < \sigma < 1/2$, and $0 < \mu \leq \sigma/2$, and let $I \subseteq \R/\Z$ be an interval of length $\mu$ such that $\alpha n \in I$ for at least $\sigma N$ values of $n \in [N]$.  Then there is some $k \in \Z$ with $0 < |k| \ll \sigma^{-O(1)}$ such that $\Vert k \alpha \Vert_{\R/\Z} \ll \mu \sigma^{-O(1)}/N$.\endproof
\end{lemma}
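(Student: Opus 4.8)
\emph{Proof proposal.}
The plan is to produce the required $k$ in two stages. First I would run a Fourier/majorant argument to locate some $k$ that is simultaneously of controlled size, $|k| \ll \sigma^{-O(1)}$, and already reasonably close to making $k\alpha$ integral, say $\Vert k\alpha\Vert_{\R/\Z} \ll \sigma^{-O(1)}/N$. Then I would feed this back into the full recurrence hypothesis, via a count on residue classes modulo $k$, to boost the estimate to $\Vert k\alpha\Vert_{\R/\Z} \ll \mu\sigma^{-O(1)}/N$. Throughout one may assume $N$ is large compared with some fixed power of $1/\sigma$: otherwise the conclusion is immediate, since among the $\geq \sigma N$ integers $n$ with $\alpha n \in I$ two consecutive ones differ by $\ll 1/\sigma$, and their difference $k$ then has $0 < |k| \ll \sigma^{-1}$ and $\Vert k\alpha\Vert_{\R/\Z} \leq \mu$ (both points lying in the length-$\mu$ arc $I$), which is $\ll \mu\sigma^{-O(1)}/N$ once $N \ll \sigma^{-O(1)}$.

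\emph{Stage 1.} Put $S := \{n \in [N] : \alpha n \in I\}$, so $|S| \geq \sigma N$, and let $J := \lceil C/\sigma\rceil$ for a large absolute constant $C$. Take $F$ to be the Beurling--Selberg majorant of $1_I$ of degree $\leq J$, so that $F \geq 1_I$ on $\R/\Z$, $\widehat F(j) = 0$ for $|j| > J$, $\widehat F(0) = \mu + \tfrac1{J+1}$, and $|\widehat F(j)| \ll 1/|j|$ for $1 \leq |j| \leq J$. Since $\mu \leq \sigma/2$ and $C$ is large, $\widehat F(0) \leq \tfrac34\sigma$. Expanding $F = \sum_{|j| \leq J} \widehat F(j) e(j\cdot)$, summing over $n \in [N]$, and bounding $\bigl|\sum_{n \leq N} e(j\alpha n)\bigr| \ll \min(N, 1/\Vert j\alpha\Vert_{\R/\Z})$ gives
\[ \sigma N \leq \sum_{n \in [N]} F(\alpha n) \leq \widehat F(0)\, N + O\Bigl(\, \sum_{1 \leq j \leq J} \tfrac1j \min\bigl(N, 1/\Vert j\alpha\Vert_{\R/\Z}\bigr)\Bigr), \]
so that $\sum_{1 \leq j \leq J} \tfrac1j \min(N, 1/\Vert j\alpha\Vert_{\R/\Z}) \gg \sigma N$. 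As the weights $\tfrac1j$ sum to $\ll \log(1/\sigma)$, there must be some $1 \leq j_0 \leq J \ll 1/\sigma$ with $\min(N, 1/\Vert j_0\alpha\Vert_{\R/\Z}) \gg \sigma N/\log(1/\sigma)$, and since $\sigma/\log(1/\sigma) < 1$ this forces $\Vert j_0\alpha\Vert_{\R/\Z} \ll \log(1/\sigma)/(\sigma N) \ll \sigma^{-2}/N$.

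\emph{Stage 2.} Write $\alpha = a/j_0 + \theta$ with $a$ the nearest integer to $j_0\alpha$, so $|\theta| = \Vert j_0\alpha\Vert_{\R/\Z}/j_0$; set $\eta := \Vert j_0\alpha\Vert_{\R/\Z}$. On the residue class $n \equiv r \pmod{j_0}$, writing $n = r + j_0 m$, one has $\alpha n \equiv c_r + (j_0\theta)\,m \pmod 1$ for a suitable $c_r$, so the elements of $S$ in this class correspond to the terms of an arithmetic progression in $\R/\Z$, of common difference of magnitude $\eta$ and length $\leq N/j_0$, that lie in the fixed length-$\mu$ arc $I - c_r$. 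A routine count shows at most $\mu(N/j_0) + \mu/\eta + \eta(N/j_0) + O(1)$ such terms; summing over the $j_0$ classes,
\[ \sigma N \leq |S| \ll \mu N + \frac{j_0\mu}{\eta} + \eta N + j_0. \]
Now $\mu N \leq \tfrac12\sigma N$, while $j_0 \ll \sigma^{-1}$ and (by Stage 1) $\eta N \ll \sigma^{-2}$ are each $\ll \sigma N$ for $N$ large; hence $\tfrac{j_0\mu}{\eta} \gg \sigma N$, i.e. $\Vert j_0\alpha\Vert_{\R/\Z} = \eta \ll \tfrac{j_0\mu}{\sigma N} \ll \tfrac{\mu}{\sigma^2 N}$. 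Taking $k := j_0$ completes the argument.

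\emph{Where the difficulty lies.} The crux is Stage 1: I need a single $k$ that is good in \emph{both} senses at once, and the one-purpose moves fail. A close-pair/pigeonhole argument yields a small denominator but only $\Vert k\alpha\Vert_{\R/\Z} \leq \mu$, which is useless in Stage 2 because then the error term $\eta N$ can be as large as $\mu N \asymp \sigma N$ and is not absorbed; a Fourier majorant of $1_I$ truncated at its natural scale $\asymp 1/\mu$ does make $\Vert k\alpha\Vert_{\R/\Z}$ small but permits the denominator to be as large as $\asymp 1/\mu$, possibly far above $\sigma^{-O(1)}$. The Beurling--Selberg majorant bridges this: being an \emph{exact} trigonometric polynomial of the coarse degree $J \asymp 1/\sigma$ whose mean $\mu + \tfrac1{J+1}$ still lies safely below $\sigma$ (the one place $\mu \leq \sigma/2$ is used), its mass at nonzero frequencies is $\gg \sigma N$ while all frequencies satisfy $|j| \leq J \ll 1/\sigma$, and the $\asymp 1/|j|$ decay of its coefficients then pins down one frequency $j_0$ with $\Vert j_0\alpha\Vert_{\R/\Z}$ genuinely small. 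Everything else is the elementary lattice-point count in Stage 2 and checking that each of its error terms is dominated by $\sigma N$; alternatively, Stage 1 could be replaced by a three-distance-theorem analysis of the return times of the orbit to $I$.
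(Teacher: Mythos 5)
Note first that the paper does not prove this lemma at all: it is quoted verbatim, with the proof deferred to the companion paper (\cite[Lemma 3.2]{green-tao-nilratner}), where it is obtained by elementary means --- essentially a Vinogradov-type pigeonhole-plus-counting argument for an arc centred at the origin, to which the general arc $I$ is reduced by a differencing step. Your argument is therefore a genuinely different (and correct) route to the same statement: the Selberg majorant of $1_I$ truncated at the coarse degree $J \asymp 1/\sigma$, whose mean $\mu + \tfrac{1}{J+1} \leq \tfrac34\sigma$ forces a nonzero frequency $j_0 \ll \sigma^{-1}$ with $\Vert j_0\alpha\Vert_{\R/\Z} \ll \sigma^{-2}/N$, followed by the lattice-point count along the residue classes modulo $j_0$ applied directly to the general arc. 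This buys something: you avoid the differencing reduction (the step that costs extra powers of $\sigma$ in the cited treatment) and you get explicit exponents, namely $0 < |k| \ll \sigma^{-1}$ and $\Vert k\alpha\Vert_{\R/\Z} \ll \mu\sigma^{-2}/N$, which is stronger than what the lemma asserts. Your closing observation --- that a bare close-pair pigeonhole gives only $\Vert k\alpha\Vert_{\R/\Z} \leq \mu$, which is exactly too weak to be absorbed in Stage 2 --- correctly identifies why some such input as Stage 1 is needed.

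Two points to tighten. First, your reduction to $N \geq \sigma^{-O(1)}$ needs at least two elements of the set $S$, i.e.\ $\sigma N \geq 2$; in the regime where $S$ may be a single point the statement carries no content for a general $\alpha$ (take $\mu$ arbitrarily small and $\alpha$ badly approximable), so that regime must simply be excluded, exactly as it is implicitly in the source --- harmless here, since the lemma is only invoked in the paper with $N$ far larger than any fixed power of $\sigma^{-1}$. Second, in Stage 2 the displayed inequality $\sigma N \leq |S| \ll \mu N + j_0\mu/\eta + \eta N + j_0$ is too lossy as written: the only leverage on $\mu$ is $\mu \leq \sigma/2$, so absorbing the $\mu N$ term requires its coefficient to be essentially $1$ (an unspecified implied constant $\geq 2$ would kill the step). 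Your per-class count does deliver this, since it gives at most $(\eta M + 2)(\mu/\eta + 1) = \mu M + \eta M + 2\mu/\eta + 2$ points with $M \leq N/j_0 + 1$; carry that unit coefficient explicitly through the sum over the $j_0$ classes, obtaining $\sigma N \leq \mu N + \eta N + O(j_0\mu/\eta) + O(j_0)$, and then the deduction $j_0\mu/\eta \gg \sigma N$, hence $\Vert j_0\alpha\Vert_{\R/\Z} \ll \mu\sigma^{-2}/N$, is complete.
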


Let us attempt to apply this lemma with $\sigma \gg \delta^{O(c_1)}$ and $\mu \ll \delta^{-O(c_1)} (K/N)^j$. If $N$ is sufficiently large and the exponent $\sigma$ in \eqref{deltabeta} is sufficiently small, we see using the bound $K/N \leq N^{-1/3}$ that the hypotheses of the lemma are satisfied and that such an application is permissible. The conclusion is that there is some $q'$, $1 \leq q' \ll \delta^{-O(c_1)}$, such that 
\begin{equation}\label{eq33} \Vert qq' \beta_i \Vert_{\R/\Z} \ll \delta^{-O(c_1)} N^{-i}.\end{equation}

Writing $\tilde \psi := qq' \psi$, it follows from \eqref{explicit-7} and \eqref{eq33} that for any $n$ we have the bound
\[ \Vert \tilde \psi \circ g(n) \Vert_{\R/\Z} \ll \delta^{-O(c_1)} n/N.\]
If $N' := \delta^{C c_1} N$ for some sufficiently large $C$, and if $n \in [N']$, this implies that \begin{equation}\label{tpsig}
\Vert \tilde \psi \circ g(n) \Vert _{\R/\Z} \leq 1/10.
\end{equation}

Now set $\tilde F : G/\Gamma \rightarrow [-1,1]$ to be the function $\tilde F := \eta \circ \tilde \psi$, where $\eta: \R/\Z \to [-1,1]$ is a function of Lipschitz norm $O(1)$ and mean zero which equals $1$ on $[-1/10,1/10]$.  Then we have  $\int_{G/\Gamma} \tilde F = 0$ and $\Vert \tilde F \Vert \ll \delta^{-O(c_1)}$. From \eqref{tpsig}, we have
\[ | \E_{n \in [N']}\tilde F(g(n)\Gamma)| \geq 1 > \delta \Vert \tilde F \Vert,\] 
provided that $c_1$ is chosen sufficiently small. This is contrary to the assumption that $(g(n)\Gamma)_{n \in [N]}$ is $\delta$-totally equidistributed.

\emph{The Type II case.} This is in many ways very closely similar to the Type I case, as the reader will see. Recall the situation that \eqref{typeII} puts us in (with our choice of $\eps$): there are $K,W$ with $\frac{1}{2}N^{1/3} \leq K \leq 4N^{2/3}$ and $N/4 \leq KW \leq 4N$ such that
\[ |\E_{K < k,k' \leq 2K}\E_{W < w,w' \leq 2W} f(kw)f(kw')f(k'w)f(k'w')| \gg \delta^{O(c_1)},\] where $f(n) = 1_P(n) F(g(n)\Gamma)$. 
Writing the left-hand side here as
\[ \E_{K < k,k' \leq 2K} |\E_{W < w \leq 2W} f(kw)f(k'w)|^2,\] we see that there are $\gg \delta^{O(c_1)} K^2$ pairs $(k,k') \in (K,2K]^2$ such that 
\[ |\E_{W < w \leq 2W} f(kw) f(k'w)| \gg \delta^{O(c_1)}.\]
Written out in full, for each such pair $(k,k')$ we have
\[ |\E_{W < w \leq 2W} 1_P(kw) 1_P(k'w) F(g(kw)\Gamma) F(g(k' w)\Gamma)| \gg (\eps/\log N)^{O(1)}.\] Writing $l$ for the common difference of $P$ (thus $1 \leq l \leq Q$) we see that there is some $b \md{l}$ such that for $\gg (\eps/\log N)^{O(1)} K^2$ pairs $(k,k')$ we have
\[ \sum_{\substack{W < w \leq 2W \\ w \equiv b \mdsub{l}  }}1_P(kw) 1_P(k'w) F(g(kw)\Gamma) F(g(k' w)\Gamma)| \gg \delta^{O(c_1)} \frac{W}{l}.\]
Setting $w = lw' + b$, this may be written as
\begin{equation}\label{eq447b} |\sum_{w' \in I_{k,k'}} F(g(k(b + lw')\Gamma)F(g(k'(b + lw'))\Gamma) | \gg \delta^{O(c_1)} \frac{W}{l},\end{equation}
where $I_{k,k'} \subseteq (\frac{W}{l} - 1, \frac{2W}{l}]$ is an interval. Since $1 \leq l \leq Q$, which is bounded by a small power of $N$, and $W \gg N^{1/3}$, this is contained in $[\frac{W}{2l},\frac{2W}{l}]$.

For each $k,k'$ for which this holds, consider the sequence $g_{k,k'} : \Z \rightarrow G \times G$ defined by $g_{k,k'}(n) = (g(kn),g(k'n))$, and also the sequence $\tilde g_{k,k'} : \Z \rightarrow G \times G$ defined by $\tilde g_{k,k'}(n) = (g(k(b + ln),g(k'(b + ln)))$. It follows from \cite[Corollary 6.8]{green-tao-nilratner} that $g_{k,k'},\tilde g_{k,k'} \in \poly(\Z,G_{\bullet}\times G_{\bullet})$. Now from \eqref{eq447b} we see that the sequence $(\tilde g_{k,k'}(n)(\Gamma \times \Gamma))_{n \in [N_{k,k'}]}$ fails to be $\delta^{O(c_1)}$-equidistributed in $(G/\Gamma) \times (G/\Gamma)$, for some $N_{k,k'} \in [\frac{W}{2l}, \frac{2W}{l}]$.

It follows from \cite[Theorem 2.9]{green-tao-nilratner} that there is a nontrivial horizontal character $\psi_{k,k'} : G \times G \rightarrow \R/\Z$ with $|\psi_k| \ll \delta^{-O(c_1)}$ such that 
\[ \Vert \psi_{k,k'} \circ \tilde g_{k,k'} \Vert_{C^{\infty}[N_{k,k'}]} \ll \delta^{-O(c_1)}.\]
By \cite[Lemma 8.4]{green-tao-nilratner} there is some $q_{k,k'} \ll \delta^{-O(c_1)}$ such that 
\[ \Vert q_{k,k'} \psi_{k,k'} \circ g_{k,k'} \Vert_{C^{\infty}[N_{k,k'}]} \ll \delta^{-O(c_1)}.\]

Pigeonholing in the possible choices of $q_{k,k'}\psi_{k,k'}$, we may find some $\psi$ with $0 < |\psi| \ll \delta^{-O(c_1)}$ such that 
\begin{equation}\label{condi} \Vert \psi \circ g_{k,k'} \Vert_{C^{\infty}[N_{k,k'}]} \ll \delta^{-O(c_1)}\end{equation} for $\gg \delta^{O(c_1)} K^2$ pairs $k,k' \in (K,2K]$.

Write $\psi = \psi_1 \oplus \psi_2$, where $\psi_1,\psi_2 : G \rightarrow \R/\Z$ are horizontal characters, not both zero. If
\[ \psi_1 \circ g(n) = \beta_d n^d + \dots + \beta_0\] and
\[ \psi_2 \circ g(n) = \beta'_d n^d + \dots + \beta'_0\]
then
\[ \psi \circ g_{k,k'}(n) = (\beta_d k^d + \beta'_d k^{\prime d})n^d + \dots + (\beta_0 + \beta'_0),\]
By Lemma \ref{smooth-coeff} and \eqref{condi} there is some $1 \leq q \ll \delta^{-O(c_1)}$ such that 
\[ \Vert q(k^j\beta_j + k^{\prime j}\beta'_j)\Vert_{\R/\Z} \ll \delta^{-O(c_1)} N_{k,k'}^{-j} \ll \delta^{-O(c_1)} (K/N)^{j}\] for $j = 1,2,\dots,d$ and for $\gg \delta^{O(c_1)} K^2$ pairs $k,k' \in (K,2K]$.

Suppose, without loss of generality, that $\psi_1 \neq 0$. Selecting some $k'$ that occurs in $\gg \delta^{O(c_1)} K$ of the pairs $k,k'$ and subtracting, we see that 
\begin{equation}\label{eq448} \Vert qk^j \beta_j \Vert_{\R/\Z} \ll \delta^{-O(c_1)} (K/N)^j\end{equation} for $\gg \delta^{O(c_1)} K$ values of $k \in (-K,K)$. Using the bounds $K \gg N^{1/3}$ and \eqref{deltabeta} it follows that we may ignore the contribution of $k = 0$, that is to say \eqref{eq448} holds for $\gg \delta^{O(c_1)} K$ values of $k \in [1,K]$.

\emph{Remark.} Note carefully that \eqref{eq448} carries no information when $k = 0$. In our treatment of Type I sums there was no need for a lower bound on $K$, but such an assumption is essential if one has any desire to bound Type II sums.

The estimate \eqref{eq448} is identical to \eqref{qkj}.  We may now repeat the arguments used to obtain a contradiction to \eqref{qkj} in Type I case.  The proof of Proposition \ref{equi-prop} and thus Theorem \ref{mainthm} is now complete.
\endproof

The main business of the paper is now complete. In the next section we give a brief discussion of how our argument compares with the classical Hardy-Littlewood method. After that we give a number of applications of Theorem \ref{mainthm}.

\section{Remarks on a nilpotent Hardy-Littlewood method}

It may be of interest to interpret our method in terms of the ``major and minor arcs'' terminology of the Hardy-Littlewood method. Recall that to prove Davenport's estimate
\[ |\E_{n \in [N]} \mu(n) e(\alpha n)| \ll_A \log^{-A} N\] one divides into two cases: the \emph{major arcs} where $\alpha$ is close to a rational with small denominator, and the \emph{minor arcs} where it is not. The major arcs are handled using $L$-function technology as in Appendix \ref{app-A}, and the minor arcs are handled using Type I/II sums as in Proposition \ref{inverse-prop}.

Suppose that we are considering the sum
\[ \E_{n \in [N]} \mu(n) F(g(n)\Gamma),\]
where $\int_{G/\Gamma} F = 0$. 
Decompose $g$ as a product $\eps g' \gamma$ where $\eps$ is smooth, $\gamma$ is rational and $g'$ is highly equidistributed on some subgroup $G'$. Then one might think of $g$ as a ``major arc'' nilsequence if $G' = \{\id_G\}$, and as ``minor arc'' if $G'$ is nontrivial. 

To justify this terminology, observe that one may interpret $e(\alpha n)$ as $F(g(n)\Gamma)$, where $G/\Gamma = \R/\Z$, $g : \Z \rightarrow \R$ is the polynomial sequence $g(n) = \alpha n$ and the Lipschitz function $F$, taking values in the unit ball of the complex plane, is simply $e(\theta)$.

If $\alpha  = \frac{a}{q} + \eps$, where $\eps$ is small, then the decomposition $g = \eps g' \gamma$ will be given by $\eps(n) = \eps n$, $g'(n) = \id_G$ and $\gamma(n) = an/q$ and so this does indeed correspond to a ``major arc nilsequence''.

If $\alpha$ is not close to a rational with small denominator then $g(n)$ will already be highly equidistributed on $\R/\Z$, and so the decomposition $g = \eps g' \gamma$ has $\eps = \gamma = \id_G$ and $g' = g$. Thus $G' = \R$ is nontrivial and this corresponds to a ``minor arc nilsequence''.

\section{On bracket polynomials}\label{bracket-sec}

By a \emph{bracket polynomial} we mean an object formed from the scalar field $\R$ and the indeterminate $n$ using finitely many instances of the standard arithmetic operations $+$, $\times$ together with the integer part operation $\lfloor \; \rfloor$ and the fractional part operation $\{ \; \}$. The following are all bracket polynomials: $n^2 + n \sqrt{2}$, $n\sqrt{2}\lfloor n\sqrt{3}\rfloor$ and $\{ n^3\sqrt{2} + n^7\lfloor n\sqrt{5}\rfloor + \sqrt{7}\}$. One may associate a notion of \emph{complexity} to any bracket polynomial $p(n)$, this being (for instance) the least number of operations $+,\times,\lfloor \; \rfloor, \{ \; \}$ required to write down $p$. In view of the relation $\{x\} + \lfloor x \rfloor = x$, it is not strictly speaking necessary to retain both the integer and fractional part operations, but we do so here for convenience. Dispensing with one of them would slightly alter the definition of complexity.

The following remarkable theorem of Bergelson and Leibman \cite{bl-bracket} demonstrates a close link between bracket polynomials and nilmanifolds (see also earlier work of H{\aa}land, for example \cite{haland}). If $G/\Gamma$ is a nilmanifold with Mal'cev basis $\mathcal{X}$ then recall from \cite[Lemma A.14]{green-tao-nilratner} that the coordinate map $\psi : G \rightarrow \R^m$ provides an identification between $G/\Gamma$ and $[0,1)^m$. Write $\tau_1,\dots,\tau_m$ for the individual coordinate maps from $G/\Gamma$ to $[0,1)$, that is to say $\tau_i$ is the composition of $\psi$ with the map $(t_1,\dots,t_m) \mapsto t_i$.

\begin{theorem}[Bergelson-Leibman]\label{berg-leib-theorem}
The functions of the form $n \mapsto \{p(n)\}$, where $p$ is a bracket polynomial, coincide with the functions of the form $n \mapsto \tau_i(g(n)\Gamma)$, where $G/\Gamma$ is a nilmanifold equipped with a Mal'cev basis $\mathcal{X}$ and $g : \Z \rightarrow G$ is a polynomial map with coefficients in some filtration $G_{\bullet}$. The rationality of $\mathcal{X}$, the dimension of $G$, the degree of $g$ and the rationality of $G_{\bullet}$ may all be bounded in terms of the complexity of $p$, and conversely the complexity of $p$ may be bounded in terms of these quantities.
\end{theorem}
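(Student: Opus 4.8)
The plan is to prove the two inclusions separately, each by an induction that carries along a running tally of the relevant numerical data --- dimension $m$, degree $d$, and the $Q$-rationality of $\mathcal{X}$ and $G_\bullet$ on one side, complexity on the other --- so that the quantitative equivalence asserted in the final sentence of the theorem falls out of the construction. Of the two directions, the passage from nilmanifold coordinates to bracket polynomials is the more mechanical, and I would handle it first.

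\textbf{From nilmanifold coordinates to bracket polynomials.} The idea is to make the identification $G/\Gamma \cong [0,1)^m$ of \cite[Lemma A.14]{green-tao-nilratner} entirely algorithmic. Since $g \in \poly(\Z,G_\bullet)$, its Taylor expansion $g(n) = g_0 g_1^{\binom{n}{1}}\cdots g_d^{\binom{n}{d}}$ together with the Mal'cev-coordinate estimates of \cite[Appendix A]{green-tao-nilratner} shows that the raw coordinates $\psi_{\mathcal{X}}(g(n))$ are honest polynomials in $n$ of degree $\leq d$, with real coefficients controlled by $Q$. To compute $\psi_{\mathcal{X}}$ of the coset $g(n)\Gamma$ one reduces $g(n)$ into the fundamental domain by a greedy algorithm: running through the Mal'cev basis, at the $i$-th stage one multiplies on the right by $\exp(X_i)^{-k_i}$, where $k_i$ is the integer part of the relevant coordinate at that stage. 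Because the group law is polynomial in the coordinates with $Q^{O(1)}$-rational coefficients, each $k_i$ --- and hence each coordinate of the reduced point --- is produced from the earlier ones by boundedly many additions, multiplications and integer-part operations. Unwinding the recursion exhibits $\tau_i(g(n)\Gamma)$ as $\{p(n)\}$ for a bracket polynomial $p$ of complexity bounded in terms of $m$, $d$ and $Q$.

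\textbf{From bracket polynomials to nilmanifold coordinates.} Here I would induct on the complexity of $p$, after first using the standard algebra of bracket polynomials (distributivity, $\{x\} = x - \lfloor x\rfloor$, $\lfloor a + \lfloor b\rfloor\rfloor = \lfloor a\rfloor + \lfloor b\rfloor$, and so on) to write $p$ as a finite real-linear combination of iterated bracket monomials $c\,n^{a_0}\lfloor c_1 n^{a_1}\lfloor c_2 n^{a_2}\lfloor\cdots\rfloor\rfloor\rfloor$, whose nesting depth and exponents are controlled by the complexity of $p$. Ordinary polynomials $n \mapsto \alpha n^j$ are realised modulo $1$ as coordinate functions of polynomial orbits on a torus (equipped with a filtration of the appropriate degree), which is the base case. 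The inductive step is modelled on the Heisenberg nilmanifold: given a nilmanifold $G/\Gamma$ and a polynomial orbit already realising an inner layer --- say $\{c_1 n^{a_1}\lfloor\cdots\rfloor\} = \tau_j(g(n)\Gamma)$, with the ``overflow'' $\phi(n) := \lfloor c_1 n^{a_1}\lfloor\cdots\rfloor\rfloor$ recorded in a dedicated direction --- one forms a central extension of $G$ by $\R$ (placing the new $\R$ in a suitably high step of the filtration) and a polynomial orbit in it whose reduction to the fundamental domain has top coordinate $\{c\,n^{a_0}\phi(n)\}$ up to a sum of terms of strictly lower nesting depth. This is exactly the phenomenon whereby the reduction of $\exp(\alpha n X)\exp(\beta n Y)$ on the Heisenberg manifold produces $\{\alpha n\lfloor\beta n\rfloor\}$ together with a multiple of $\{\alpha n\}$. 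The lower-depth corrections are folded back into the normal form and handled by the inductive hypothesis; sums of monomials are handled by passing to a product nilmanifold and performing a linear change of Mal'cev basis so that a single coordinate computes the required sum modulo $1$. Each step increases the dimension by one and the degree and rationality by a bounded amount, which yields the quantitative bound.

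\textbf{The main obstacle.} The delicate point is this inductive step in the second direction: one must exhibit, for each monomial shape, an explicit nilpotent Lie group with rational structure constants, a rational Mal'cev basis adapted to a filtration, and a polynomial orbit in it, and then verify that the fundamental-domain reduction reproduces the intended bracket monomial with an error that is genuinely of lower complexity, so that the induction closes --- all while keeping dimension, degree and rationality under polynomial control. By contrast, the additivity step and the whole of the first direction are comparatively routine applications of the quantitative nilpotent linear algebra assembled in \cite[Appendix A]{green-tao-nilratner}.
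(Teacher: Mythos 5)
First, a point of comparison that matters here: the paper does not prove this statement. It is quoted as a theorem of Bergelson and Leibman and used as a black box in Section \ref{bracket-sec}, with the reader referred to \cite{bl-bracket}, where the proof is a substantial piece of work. So there is no in-paper argument to measure your sketch against; what can be assessed is whether your outline would itself close into a proof. Your first direction (coordinates of a polynomial orbit give bracket polynomials) is essentially right and is indeed the routine half: the Mal'cev coordinates $\psi_{\mathcal X}(g(n))$ are genuine polynomials in $n$ of degree at most $d$ with controlled coefficients, and reduction into the fundamental domain is an explicit iteration of the ($Q^{O(1)}$-rational, polynomial) group law and integer-part operations, which visibly outputs a bracket polynomial of complexity bounded in terms of $m$, $d$, $Q$.

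The genuine gap is at the very first step of your converse direction. You assume that the ``standard algebra of bracket polynomials'' lets you write an arbitrary bracket polynomial as a real-linear combination of nested chain monomials $c\,n^{a_0}\lfloor c_1 n^{a_1}\lfloor \cdots \rfloor\rfloor$. No such normal form follows from elementary manipulations: brackets do not distribute over products, and already expressions such as $\lfloor \alpha n\rfloor\lfloor \beta n\rfloor$, $\{\alpha n\}\{\beta n\}$, or $\lfloor \alpha n\lfloor \beta n\rfloor\rfloor\cdot\lfloor \gamma n\rfloor$ are not of chain shape and are not reduced to it by the identities you list. The canonical-form results that do exist for generalized polynomials (Leibman's later work) are derived \emph{from} the nilmanifold representation theorem, so invoking a normal form here is either unsupported or circular. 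This gap propagates into your inductive step: to close the induction one needs the class of representable sequences to be stable under \emph{multiplication of two arbitrary already-represented bracket polynomials} (and under bracketing the result), not merely under multiplying a represented chain by $n^{a_0}$ and re-bracketing; moreover the ``overflow'' corrections produced by a Heisenberg-type central extension can again involve products of brackets of the same nesting depth, so your notion of decreasing complexity is not well defined as stated. This closure under $+$, $\times$ and $\{\cdot\}$, established by constructing suitable towers of extensions rather than single central extensions keyed to a normal form, is precisely where the real content of the Bergelson--Leibman theorem lies, and it is missing from the proposal.
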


In fact, Bergelson and Leibman prove a number of rather refined variants of this type of result, and they also give a comprehensive and edifying discussion of bracket polynomials in general. At first glance it appears that one might immediately combine Theorem \ref{berg-leib-theorem} with Theorem \ref{mainthm} to obtain a result about the correlation of the M\"obius function with bracket polynomials. There is a serious catch, however: the coordinate functions $\tau_i$ are not continuous on the nilmanifold $G/\Gamma$. Furthermore, as observed by Bergelson and Leibman, there are bracket polynomials which \emph{cannot} be written in the form $F(g(n)\Gamma)$ for a continuous $F$. Indeed the results of Leibman \cite{leibman-single-poly} on the distribution of $(g(n)\Gamma)_{n \in \Z}$ imply that the sequence $(F(g(n)\Gamma))_{n \in \Z}$ cannot have isolated values, yet there are bracket polynomials which do. A simple example is $\lfloor 1 - \{n \sqrt{2}\}\rfloor$, which is zero except when $n = 0$.

One does nonetheless feel that the discontinuities of $\tau_i$ are ``mild'', as this function is continuous on that part of $G/\Gamma$ which is identified with $(0,1)^m$. However, the sequence $(g(n)\Gamma)_{n \in \Z}$ may well concentrate on a highly singular subset of $G/\Gamma$, as we discussed at length in \cite{green-tao-nilratner}. Thus a certain amount of further work is required to obtain the expected result, which is the following.

\begin{theorem}[M\"obius and bracket polynomials]\label{mob-bracket}
Suppose that $p(n)$ is a bracket polynomial and that $\Psi : [0,1] \rightarrow [-1,1]$ is a Lipschitz function. Then we have the estimate
\[ \E_{n \in [N]} \mu(n) \Psi(\{p(n)\}) \ll_{A,\Psi} \log^{-A} N,\]
where the implied constant depends only on $A$, $\Psi$ and the complexity of $p$ \textup{(}but is ineffective\textup{)}.
\end{theorem}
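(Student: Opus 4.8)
The plan is to pass to the language of nilmanifolds using the Bergelson--Leibman theorem (Theorem \ref{berg-leib-theorem}) and then to deduce the result from the Main Theorem by approximating the discontinuous function $\Psi\circ\tau_i$ by genuine Lipschitz functions; the one substantial point, already flagged above, is to control how often the orbit $(g(n)\Gamma)_{n\in[N]}$ visits a neighbourhood of the discontinuity locus of $\tau_i$.

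First I would apply Theorem \ref{berg-leib-theorem} to write $\{p(n)\}=\tau_i(g(n)\Gamma)$ for some nilmanifold $G/\Gamma$ of dimension $m$, a $Q$-rational Mal'cev basis $\mathcal X$, a filtration $G_\bullet$ of degree $d$ and a sequence $g\in\poly(\Z,G_\bullet)$, all of complexity $O_p(1)$; thus $\mu(n)\Psi(\{p(n)\})=\mu(n)\,(\Psi\circ\tau_i)(g(n)\Gamma)$, and all implied constants may depend on $p$. Fix a thickness parameter $\rho:=\log^{-\kappa}N$, with $\kappa$ large and to be chosen in terms of $A$, and let $B_\rho\subseteq G/\Gamma$ be the $\rho$-neighbourhood of the (measure zero) discontinuity locus of $\tau_i$; in coordinates $B_\rho$ is contained in the set on which some Mal'cev coordinate lies within $\rho$ of $0$ or of $1$, so $\mes(B_\rho)\ll_p\rho$. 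Since $\tau_i$ is smooth in the Mal'cev coordinates away from the seam, I would build a Lipschitz function $F:G/\Gamma\to[-2,2]$ with $F\geq\Psi\circ\tau_i$ everywhere, $F=\Psi\circ\tau_i$ off $B_\rho$, and $\|F\|\ll_{p,\Psi}\rho^{-O(1)}$. Then
\[ |\E_{n\in[N]}\mu(n)\,(\Psi\circ\tau_i)(g(n)\Gamma)| \;\leq\; |\E_{n\in[N]}\mu(n)F(g(n)\Gamma)| \;+\; O_\Psi\!\left(\E_{n\in[N]} 1_{B_\rho}(g(n)\Gamma)\right),\]
and Theorem \ref{mainthm}, applied with $A$ replaced by a sufficiently large $A'=A'(A,\kappa)$, bounds the first term by $\ll_{A'} Q^{O(1)}(1+\|F\|)\log^{-A'}N\ll_p\log^{O(\kappa)-A'}N\ll\log^{-A}N$.

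It remains to show that the orbit spends little time near the seam, i.e. $\E_{n\in[N]} 1_{B_\rho}(g(n)\Gamma)\ll_{p,A}\log^{-A}N$. Here I would re-run the factorisation step of Section \ref{equi-sec}. Applying \cite[Theorem 1.19]{green-tao-nilratner} we write $g=\eps g'\gamma$ with $g'$ totally $M^{-B}$-equidistributed in a rational subgroup $G'$, $\gamma$ rational of period $q\leq M$, $\eps$ $(M,N)$-smooth, and $\log N\leq M\ll\log^{O_B(1)}N$; then, exactly as in \eqref{eq1}--\eqref{eq577}, splitting $[N]$ into residue classes modulo $q$ and short subintervals on which $\eps$ is essentially constant reduces matters to bounding $\E_{n}1_{B_\rho}(a_{j,k}\gamma_j\, g_j(n)\,\Gamma)$, where $g_j(n)=\gamma_j^{-1}g'(n)\gamma_j$ is totally $M^{-cB+O(1)}$-equidistributed in a subnilmanifold $H_j/\Lambda_j$ carrying an $M^{O(1)}$-rational Mal'cev basis. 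Majorising $1_{B_\rho}$ by a Lipschitz $\chi$ with $0\leq\chi\leq 1$, $\chi\geq 1$ on $B_\rho$ and $\|\chi\|\ll\rho^{-O(1)}$, total equidistribution gives
\[ \E_{n} 1_{B_\rho}(a_{j,k}\gamma_j\, g_j(n)\,\Gamma) \;\leq\; \int_{H_j/\Lambda_j}\chi(a_{j,k}\gamma_j\, y\,\Gamma)\,dy \;+\; M^{-cB+O(1)}\rho^{-O(1)},\]
so it suffices to bound $\mes_{H_j/\Lambda_j}\{y : a_{j,k}\gamma_j\, y\,\Gamma\in B_\rho\}$ by $\ll M^{O(1)}\rho^{c}$. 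Writing this condition in the Mal'cev coordinates of $H_j/\Lambda_j$, each $\mathcal X$-coordinate of $a_{j,k}\gamma_j\,y$ becomes a piecewise polynomial function of those coordinates, of bounded degree, height $\ll M^{O(1)}$, and with $\ll M^{O(1)}$ pieces; a standard sublevel-set estimate for polynomials then yields the bound $\ll M^{O(1)}\rho^{1/O(1)}$ --- unless one of these polynomials is constant, which is exactly the degenerate situation in which $\tau_i$ is essentially constant along the whole orbit piece. In that degenerate case $\Psi\circ\tau_i$ differs by $O_\Psi(\rho)$ from a constant on the relevant progression, and its contribution is controlled by Proposition \ref{mob-period} (the bound for $\E_{n\in P}\mu(n)$ over progressions $P$); alternatively one sets up an induction on $\dim G$, the orbit then lying in a nilmanifold of strictly smaller dimension. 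Choosing $B$, then $\kappa$ and $A'$, sufficiently large in terms of $A$ completes the proof.

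I expect the genuine obstacle to be this last non-concentration estimate --- bounding the measure of the seam $B_\rho$ intersected with an arbitrary subnilmanifold of $G/\Gamma$, uniformly in the polynomially bounded rationality data, and disposing cleanly of the degenerate case in which the orbit is trapped in a singular level set of $\tau_i$. The remaining ingredients --- the Bergelson--Leibman translation, the Lipschitz approximation, and the factorisation bookkeeping --- are either quoted outright or follow Section \ref{equi-sec} almost verbatim.
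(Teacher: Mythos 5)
Your proposal follows essentially the same route as the paper's own treatment: translate via the Bergelson--Leibman theorem (Theorem \ref{berg-leib-theorem}), approximate the discontinuous coordinate $\Psi\circ\tau_i$ by Lipschitz functions, invoke Theorem \ref{mainthm} (or Proposition \ref{equi-prop}) for the main term, and control the orbit's visits to the discontinuity locus using the factorisation $g=\eps g'\gamma$ and the bookkeeping of Section \ref{equi-sec}. Note, though, that the paper itself only \emph{sketches} this argument and explicitly defers a complete proof to a future publication, so the technical core you flag is not filled in there either. The one genuine organisational difference is where that core is located: you thicken the seam inside the ambient nilmanifold $G/\Gamma$ and then must estimate $\mes_{H_j/\Lambda_j}\{y: a_{j,k}\gamma_j y\Gamma\in B_\rho\}$, which forces you to confront head-on the degenerate case in which the subnilmanifold lies (largely) inside the seam --- your fallback via Proposition \ref{mob-period} or induction on dimension is plausible but is exactly the delicate point, and ``essentially constant on the relevant progression'' does not follow automatically from one coordinate polynomial being constant once the fractional-part reductions and the piecewise structure are taken into account. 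The paper instead restricts $\tau_i$ to the equidistribution subnilmanifold $G'/\Gamma'$ \emph{first} and asserts (via the Appendix A machinery of \cite{green-tao-nilratner}) that the discontinuity locus of the restricted function $\tilde\tau_i$ is a piecewise polynomial set of positive codimension there; this formulation automatically absorbs your degenerate case --- compare Case 2 ($\alpha=\beta=\sqrt2$), where the orbit is trapped in a one-dimensional subgroup yet the restricted $\tau_3$ is continuous off a single point, so the correct object to approximate is the restriction, not the ambient seam. So: same approach, correctly identified obstacle, but your version of the crux is slightly cruder than the paper's and would need the degenerate-case analysis made rigorous, a gap the paper avoids by restricting before approximating (and which, in full detail, neither the paper nor you actually carries out here).
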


We shall illustrate how this theorem may be deduced from Theorem \ref{mainthm} by discussing two related special cases. We will then sketch the details that are required in order to write down a complete proof.  The authors plan to include a complete proof of Theorem \ref{mob-bracket} in a future publication.

Both special cases will take place on the Heisenberg nilmanifold $G/\Gamma$, where
\[ G = \left(\begin{smallmatrix} 1 & \R & \R \\ 0 & 1 & \R \\ 0 & 0 & 1\end{smallmatrix}\right),  \Gamma = \left(\begin{smallmatrix} 1 & \Z & \Z \\ 0 & 1 & \Z \\ 0 & 0 & 1\end{smallmatrix}\right).\]
Computations with Mal'cev bases in this setting were given in \cite[Appendix B]{green-tao-u3mobius} and then again in \cite[\S 5]{green-tao-nilratner}, where we took
\[ e_1 = \exp(X_1) = \left(\begin{smallmatrix} 1 & 1 & 0 \\ 0 & 1 & 0 \\ 0 & 0 & 1\end{smallmatrix}\right), e_2 = \exp(X_2) = \left(\begin{smallmatrix} 1 & 0 & 0 \\ 0 & 1 & 1 \\ 0 & 0 & 1\end{smallmatrix}\right), e_3 = \exp(X_3) = \left(\begin{smallmatrix} 1 & 0 & 1 \\ 0 & 1 & 0 \\ 0 & 0 & 1\end{smallmatrix}\right).\] We briefly recall some of the computations carried out in somewhat more detail in that paper; in any case the proofs are nothing more than computations with 3 $\times$ 3 matrices.
The coordinate function $\psi : G \rightarrow \R^3$ is then given by the formula
\[ \psi\left(\left(\begin{smallmatrix} 1 & x & z \\ 0 & 1 & y \\ 0 & 0 & 1\end{smallmatrix}\right)\right) = (x,y,z - xy),\] and the element written here is equivalent, under right multiplication by an element of $\Gamma$, to the element with coordinates
\[ (\{x\}, \{y\}, \{z - xy - \lfloor x\rfloor y\}).\] Note that this lies inside the fundamental domain $[0,1)^3$. It follows that, for any $\alpha,\beta \in \R$, we have
\[ \{n\beta\lfloor n\alpha\rfloor\} = \tau_3(g(n)\Gamma),\]
where $\tau_3 : G/\Gamma \rightarrow [0,1)$ is the map into the third coordinate and $g : \Z \rightarrow G$ is the polynomial sequence given by
\[ g(n) = \left(\begin{smallmatrix} 1 & n\alpha & n^2\alpha\beta \\ 0 & 1 & n\beta \\ 0 & 0 & 1\end{smallmatrix}\right).\]
This is an explicit example of the representation of a bracket polynomial, in this case $\{n\beta \lfloor n\alpha\rfloor\}$, in the form discussed in Bergelson and Leibman's theorem.

We discuss two different cases.

\emph{Case 1.} $\alpha = \sqrt{2}$, $\beta = \sqrt{3}$. Then the sequence $(g(n)\Gamma)_{n \in [N]}$ is totally $N^{-c}$-equidistrib- uted on $G/\Gamma$, which makes life rather easy. To prove the equidistribution one may use \cite[Theorem 2.9]{green-tao-nilratner} together with the lower bound \[ \min_{\substack{|k_1|,|k_2|,|k_3| \leq K \\ (k_1,k_2,k_3) \neq (0,0,0)}}\Vert k_1 \sqrt{2} + k_2 \sqrt{3}\Vert_{\R/\Z} \gg K^{-C},\] which follows from the fact that, for any $k_3$ with $|k_3| \leq K$, $k_1\sqrt{2} + k_2\sqrt{3} + k_3$ satisfies a quartic over $\Z$ with coefficients of size $K^{O(1)}$.
Although the function $\tau_3$ is not continuous, it is continuous outside of a subset of $G/\Gamma$ of measure zero, namely outside of $[0,1)^3 \setminus (0,1)^3$. This means that it may be approximated by Lipschitz functions. More precisely, for any fixed Lipschitz function $\Psi : [0,1] \rightarrow [-1,1]$ and any $\eps > 0$ one may find functions $F_1,F_2 : G/\Gamma \rightarrow \C$ with $\Vert F_1\Vert_{\infty}, \Vert F_2 \Vert_{\infty} \leq 1$, $\Vert F_1 \Vert_{\Lip}, \Vert F_2 \Vert_{\Lip} \leq \eps^{-O(1)}$, $|\Psi \circ \tau_3 - F_1| \leq F_2$ pointwise and $\int_{G/\Gamma} F_2 \leq \eps$. From Proposition \eqref{equi-prop} we have 
\[ \E_{n \in [N]} \mu(n) F_1(g(n)\Gamma) \ll N^{-c}, \] and the uniform distribution of $(g(n)\Gamma)_{n \in [N]}$ implies that 
\[ \E_{n \in [N]} F_2(g(n)\Gamma) \leq \eps + O(\eps^{-O(1)}N^{-c}).\] Now we have the bounds
\begin{align*}
|\E_{n \in [N]} \mu(n)\Psi(n\sqrt{3}\lfloor n\sqrt{2}\rfloor)| & = |\E_{n \in [N]} \mu(n)\Psi \circ \tau_3(g(n)\Gamma)| \\ & \leq |\E_{n \in [N]} \mu(n) F_1(g(n)\Gamma)| + \E_{n \in [N]} F_2(g(n)\Gamma).\end{align*}

Letting $\epsilon = N^{-c'}$ for some sufficiently small $c' > 0$, we obtain an effective and much stronger version of Theorem \ref{mob-bracket} in this case, namely the bound
\[ \E_{n \in [N]} \mu(n) \Psi(\{n\sqrt{3}\lfloor n\sqrt{2}\rfloor\}) \ll N^{-c}.\]

\emph{Case 2.} $\alpha = \beta = \sqrt{2}$. Now the sequence $(g(n)\Gamma)_{n \in [N]}$ is manifestly \emph{not} uniformly distributed on $G/\Gamma$. In fact $g$ takes values in the one-dimensional subgroup $G' \subseteq G$ defined by
\[ G' = \{ \left(\begin{smallmatrix} 1 & x & x^2/2 \\ 0 & 1 & x \\ 0 & 0 & 1\end{smallmatrix}\right) : x \in \R\}.\]
The preceding argument breaks down. One could appeal to Theorem \ref{mainthm} instead of Proposition \ref{equi-prop}, but the problem comes when one tries to control the term
\[ \E_{n \in [N]} F_2(g(n)\Gamma).\]
Without knowing something more about the relation between the support properties of $F_2$ and the orbit $(g(n)\Gamma)_{n \in [N]}$, it is not possible to control this term.

In the case at hand $(g(n)\Gamma)_{n \in [N]}$ is $N^{-c}$-equidistributed in the nilmanifold $G'/\Gamma'$ where $\Gamma' := \Gamma \cap G$. Topologically and algebraically this nilmanifold is nothing more that $\R/\Z$, but one should note carefully that the Haar measure on this nilmanifold is not the same as the measure induced from the Haar measure on $G$. This may be used to ``explain'' the observation that $n\sqrt{2}\lfloor n\sqrt{2}\rfloor$ is not uniformly distributed modulo one; see \cite{bl-bracket} for further details. 

Inside $G/\Gamma$, $G'/\Gamma'$ may be identified with the union of two segments
\[ \{ \left(\begin{smallmatrix} 1 & x & x^2/2 \\ 0 & 1 & x \\ 0 & 0 & 1\end{smallmatrix}\right) : 0 \leq x < 1\} \cup \{ \left(\begin{smallmatrix} 1 & x & (1 + x^2)/2 \\ 0 & 1 & x \\ 0 & 0 & 1\end{smallmatrix}\right) : 0 \leq x < 1\},\] and this makes it clear that the induced map $\tau_3 : G'/\Gamma' \rightarrow [0,1)$ is continuous away from a single point. By an analysis very similar to the preceding one it may once again be shown that

\[ \E_{n \in [N]} \mu(n) \Psi(\{n\sqrt{2}\lfloor n\sqrt{2}\rfloor\}) \ll N^{-c}\] for any fixed Lipschitz function $\Psi : [0,1] \rightarrow [-1,1]$.

Amongst examples of the form $n\beta\lfloor n\alpha\rfloor$ there is a third distinct case, typified by $\alpha = \beta = 2^{1/3}$. We leave the analysis of this to the reader.

\emph{Sketch proof of the general case of Theorem \ref{mob-bracket}.} By Theorem \ref{berg-leib-theorem}, the result of Bergelson and Leibman, it suffices to show, for any fixed Lipschitz function $\Psi : [0,1] \rightarrow [-1,1]$, that 
\[ \E_{n \in [N]} \mu(n) (\Psi \circ \tau_i)(g(n)\Gamma) \ll_{A} \log^{-A} N.\] Here, the notation and parameters are as described in Theorem \ref{berg-leib-theorem}. Now $\tau_i$ is continuous outside the set $[0,1)^m \setminus (0,1)^m$, which has zero measure in $G/\Gamma$. The issue lies in understanding how the orbit $(g(n)\Gamma)_{n \in [N]}$ interacts with this.

Now the main results of \cite{green-tao-nilratner} allow us to get a handle on this situation. Consider in particular the decomposition of $g$ as $\eps g' \gamma$ which was obtained in \cite[Theorem 1.19]{green-tao-nilratner}. Recall that $\eps : \Z \rightarrow G$ is slowly varying, $\gamma : \Z \rightarrow G$ is rational and $g' : \Z \rightarrow G'$ is such that $(g'(n)\Gamma')_{n \in [N]}$ is totally equidistributed. For a full proof of Theorem \ref{mob-bracket} one would naturally need to specify appropriate quantitative parameters here. Suppose for simplicity that $\eps = \gamma = \id_G$ (this was, in fact, the case in the two examples above).

Choose a Mal'cev basis for $G'/\Gamma'$ with coordinate map $\psi' : G' \rightarrow \R^{m'}$. Then $G'/\Gamma'$ may be identified with the region $\psi^{\prime -1}([0,1)^{m'}) \subseteq G$, and in this way we think of the coordinate function $\tau_i$ as a function on $G'/\Gamma'$. Write $\tilde \tau_i$ for the corresponding function on $[0,1)^{m'}$. It can be shown, making extensive use of the results of \cite[Appendix A]{green-tao-nilratner}, that $\tilde \tau_i$ is continuous outside of a \emph{piecewise polynomial set} of positive codimension, that is to say outside of a finite union of sets each of which is defined by some polynomial inequalities $a \leq P(t_1,\dots,t_{m'}) < b$ and at least one nontrivial polynomial equation $Q(t_1,\dots,t_{m'}) = c$. Related matters are discussed at greater length in \cite{bl-bracket}; in the two examples we discussed, these piecewise polynomial sets were rather simple. These sets are certainly well-behaved enough that $\tau_i$ may be approximated using Lipschitz functions $F_1$ and $F_2$ as in our treatment of the bracket polynomial $n\sqrt{3}\lfloor n\sqrt{2}\rfloor$, and in this way one may use Theorem \ref{mainthm} to obtain the desired bound
\[ \E_{n \in [N]} \mu(n) (\Psi \circ \tau_i)(g'(n)\Gamma) \ll_A \log^{-A} N.\] If $G' \neq \{\id\}$ then one may in fact use Proposition \ref{equi-prop} to obtain the stronger bound of $N^{-c}$, as in the examples.

If $\eps$ and $\gamma$ are not trivial it is even more complicated to write down a fully rigorous argument, but conceptually things are not much harder at all. The introduction of the smooth function $\eps(n)$ has a rather benign effect; if $n$ ranges over an interval of length $\delta' N$, for suitably small $\delta' = \delta'(\delta)$, the discontinuities of the functions $x \mapsto \tau_i(\eps(n) x\Gamma)$ are all contained inside a ``nice'' set of measure at most $\delta$, and one may proceed much as before. All one need do, then, is split the range $[N]$ into suitably short intervals of this type.

The introduction of $\gamma$ may be handled much as it was in the proof of Theorem \ref{mainthm}. One splits each of the intervals from the previous paragraph into progressions $P_j$ with the same (small) common difference $q$ such that $\gamma(n)\Gamma$ is constant and equal to $\gamma_j \Gamma$ on $P$. One then works with the conjugated sequences $\gamma_j^{-1}g'(n)\gamma_j$ as we did at the end of \S \ref{equi-sec}.\endproof

We conclude by remarking on some variants and generalizations of Theorem \ref{mob-bracket}. If $p_1,\dots,p_M$ are bracket polynomials and $F : (\R/\Z)^M \rightarrow \C$ is a smooth function then one could establish the estimate
\[ \E_{n \in [N]} \mu(n) F(\{p_1(n)\},\dots,\{p_M(n)\}) \ll_A \log^{-A} N\] by Fourier decomposition of $F$ and Theorem \ref{mob-bracket}. One could, if desired, restrict the range of the average to some fixed subprogression $P \subseteq [N]$ by the standard technique of approximating the cutoff $1_P(n)$ by a smoother function $\widetilde{1}_P(n)$ and then developing this as a Fourier expansion.

\section{The Liouville function}

Everything we have proved for the M\"obius function also holds for the Liouville function $\lambda : \N \rightarrow \{-1,1\}$, defined to be the unique completely multiplicative function such that $\lambda(p) = -1$ for all primes $p$. This function is related to the M\"obius function via the identity
\[ \lambda(n) = \sum_{r : r^2 | n} \mu(n/r^2).\]
Thus, with the notation and assumptions of Theorem \ref{mainthm}, we have
\[ |\E_{n \in [N]} \lambda(n) F(g(n)\Gamma)| \ll \sum_{1 \leq r \leq \sqrt{N}} \frac{1}{r^2} | \E_{m \in [N/r^2]} \mu(m) F(g(r^2 m)\Gamma)|.\]
Now by \cite[Corollary 6.8]{green-tao-nilratner} $m \mapsto g(r^2 m)$ is a polynomial sequence with coefficients in the same filtration $G_{\bullet}$ as $g$, and so we have the bound
\[ |\E_{m \in [N/r^2]} \mu(m) F(g(r^2 m)\Gamma)| \ll_{m,d,A} Q^{O_{m,d,A}(1)} (1 + \Vert F \Vert_{\Lip}) \log^{-A}(N/r^2)\] uniformly in $r$, so long as $N/r^2 \geq 2$. Summing over $r$ we obtain
\begin{align*} |\E_{n \in [N]} \lambda(n) F(g(n)\Gamma)| & \ll_{m,d,A} Q^{O_{m,d,A}(1)}(1 + \Vert F \Vert_{\Lip})\big(  \sum_{r \leq \sqrt{N}/2} \frac{1}{r^2}\log^{-A}(N/r^2)\\ & \qquad\qquad\qquad\qquad\qquad\qquad\qquad + \sum_{\sqrt{N}/2 < r \leq \sqrt{N}} \frac{1}{r^2} \big) \\ & \ll_{m,d,A} Q^{O_{m,d,A}(1)}(1 + \Vert F \Vert_{\Lip}) \log^{-A} N.\end{align*}
This is precisely Theorem \ref{mainthm}, but with $\lambda$ taking the place of $\mu$. In a similar fashion, all of the results of the preceding section concerning bracket polynomials may now also be deduced with $\lambda$ in place of $\mu$.

\section{A recurrence result along the primes}

In this section we derive the following result. Here $p_1,p_2,p_3,\dots$ is the sequence of primes.

\begin{theorem}[Prime return times on a nilmanifold]\label{prime-returns}
Suppose that $G/\Gamma$ is a nilmanifold and that $g \in G$ is such that left-multiplication by $g$ is ergodic. Then for every $x \in G/\Gamma$ the sequence $(g^{p_n}x\Gamma)_{n = 1,2,\dots}$ is equidistributed in $G/\Gamma$ in the sense that
\[ \lim_{N \rightarrow \infty} \E_{n \in [N]} F(g^{p_n}x\Gamma) = \int_{G/\Gamma} F\]
for all continuous functions $F : G/\Gamma \rightarrow [-1,1]$.
\end{theorem}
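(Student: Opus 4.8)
The plan is to deduce Theorem \ref{prime-returns} from the Main Theorem (Theorem \ref{mainthm}) by the standard device of relating averages over the primes to averages weighted by the von Mangoldt function $\Lambda$, and then removing that weight using the M\"obius estimate together with an inclusion-exclusion (Dirichlet hyperbola / Vaughan-type) identity. Concretely, first I would use summation by parts to pass from $\E_{n \in [N]} F(g^{p_n} x\Gamma)$ to a normalised sum $\frac{1}{N}\sum_{n \le N} \Lambda(n) F(g^n x\Gamma)$, using the prime number theorem to control the change of normalisation and the truncation at $p_N \asymp N\log N$; the continuity of $F$ (hence boundedness and uniform continuity on the compact nilmanifold $G/\Gamma$) makes these error terms negligible. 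So it suffices to prove
\[ \lim_{N \to \infty} \frac{1}{N} \sum_{n \le N} \Lambda(n) F(g^n x\Gamma) = \int_{G/\Gamma} F. \]

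Next I would invoke the identity $\Lambda = \Lambda^\sharp + \Lambda^\flat$, or more simply the decomposition coming from $\mu$: write $\Lambda(n) = \sum_{d\mid n} \mu(d)\log(n/d)$ and split the divisor $d$ at some threshold $D = D(N)$ (a small power of $N$, or $\exp((\log N)^{1/2})$), exactly as in the deduction that ``$\mu$ orthogonal to a class of sequences'' implies ``$\Lambda$ equidistributed along that class''. The large-$d$ (``M\"obius'') part is controlled by Theorem \ref{mainthm}: for each relevant $d$ the sequence $m \mapsto g^{dm} x\Gamma = (g^d)^m x\Gamma$ is again a linear (hence polynomial) nilsequence in the same nilmanifold, with the same filtration, with Lipschitz data independent of $d$, so Theorem \ref{mainthm} gives a bound $\ll_{A} \log^{-A} N$ uniformly, and summing the harmonic-type weights over $d \le \sqrt N$ costs only a further $\log N$; choosing $A$ large this term is $o(1)$. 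The small-$d$ part is a bounded linear combination (over $d \le D$) of genuine \emph{continuous}-weight Birkhoff-type averages $\frac{1}{N}\sum_{m} F((g^d)^m x'\Gamma)$, and here I would appeal to classical qualitative equidistribution of linear orbits on nilmanifolds (Leibman's theorem, or the quantitative statements of \cite{green-tao-nilratner}): since left-multiplication by $g$ is ergodic on $G/\Gamma$, so is multiplication by $g^d$ for every $d \ge 1$ — ergodicity of a nilsystem translation is equivalent to the induced rotation on the maximal torus $(G/\Gamma)_{\ab}$ being ergodic, and this passes to all powers — hence each such average converges to $\int_{G/\Gamma} F$; assembling the weights $\sum_{d\le D}\mu(d)\log(\cdot)$ reconstructs the constant in front of $\int F$ and the pieces combine to give the claim.

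The main obstacle, and the point requiring the most care, is the uniformity in $d$ and the bookkeeping of the decomposition: one must ensure that the Lipschitz norm, the dimension $m$, the degree $d$ of the filtration, and the rationality parameter $Q$ of the Mal'cev basis associated to the orbit $m\mapsto (g^d)^m x\Gamma$ do \emph{not} degrade as $d$ grows (for the linear-in-$n$ case they do not — one simply replaces $g$ by $g^d$ — but one should double-check that the $Q$ from \cite[Proposition A.9]{green-tao-nilratner} can be taken uniform, or absorb any polynomial-in-$d$ loss into the $\log^{-A}$ saving since $d \le \sqrt N$). A secondary nuisance is that the hypothesis only gives us a \emph{continuous} $F$, not Lipschitz, whereas Theorem \ref{mainthm} needs Lipschitz test functions; this is handled by a routine approximation — given $\eps>0$ pick a Lipschitz $F_\eps$ with $\|F - F_\eps\|_\infty < \eps$ (Stone–Weierstrass, or convolution on the Lie group), apply the above to $F_\eps$, and let $\eps \to 0$ after $N\to\infty$, using the trivial bound $|\frac1N\sum \Lambda(n)(F-F_\eps)(g^n x\Gamma)| \le \eps \cdot \frac1N\sum_{n\le N}\Lambda(n) \ll \eps$. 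Finally I would remark that, unlike the earlier results in the paper, this statement is purely qualitative (no rate), precisely because the small-$d$ input is the ineffective qualitative equidistribution theorem rather than a quantitative one; one could upgrade it to a rate by feeding in \cite[Theorem 2.9]{green-tao-nilratner} in place of Leibman's theorem, but that is not needed here.
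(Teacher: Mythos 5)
Your opening and closing reductions (partial summation and the prime number theorem to pass to a $\Lambda$-weighted average, and approximating the continuous $F$ by Lipschitz functions) are fine, but the core of your argument --- handling $\Lambda=\mu*\log$ with a cut at $D=D(N)$ using only Theorem \ref{mainthm} plus the qualitative ergodic theorem --- has a genuine gap, and it sits exactly where the paper warns that real work is needed. First, a structural slip: in your ``large-$d$'' range the M\"obius factor is attached to $d$, so for \emph{fixed} $d$ the inner sum $\sum_{m\le N/d}\log(m)F(g^{dm}x\Gamma)$ contains no M\"obius function at all and Theorem \ref{mainthm} says nothing about it; to use M\"obius orthogonality you must swap the order of summation and apply Theorem \ref{mainthm}, uniformly in $m$, to $\sum_{D<d\le N/m}\mu(d)F((g^m)^dx\Gamma)$ (this part is salvageable, since the bound in Theorem \ref{mainthm} does not depend on the group element). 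But once you do this, making that M\"obius block $o(N)$ after summing the weights $\log m$ over $m\le N/D$ forces the cut $D(N)$ to tend to infinity (indeed faster than any power of $\log N$ for any fixed $A$). That is fatal for your ``small-$d$'' step: you invoke, for each fixed $d$, qualitative unique ergodicity of $g^d$, whereas what is needed is quantitative equidistribution of $(g^{dm}x\Gamma)_{m\le N/d}$, \emph{uniformly over all} $d\le D(N)$, with a saving beating $\log N$. Ergodicity of $g$ carries no Diophantine information, so no such uniformity exists: already on $G/\Gamma=\R/\Z$ with $F=e(\cdot)$ and a suitably Liouville $\alpha$ one can find, for any prescribed $D(N)\to\infty$, scales $N$ and moduli $d\le D(N)$ with $\Vert d\alpha\Vert_{\R/\Z}\ll d/N$, for which the single term $\frac1N\bigl|\sum_{m\le N/d}\log(m)e(dm\alpha)\bigr|\gg \log(N/d)/d\to\infty$, while the whole small-$d$ block must be $o(1)$. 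Termwise ergodic-theorem bounds therefore cannot close the argument; the missing ingredient is arithmetic input on $\mu$ (or $\Lambda$) in progressions to small moduli (Siegel--Walfisz type information) to handle the near-rational, non-equidistributed regime --- this is exactly the ``major arc'' phenomenon that already blocks deducing Vinogradov's theorem on $\sum_{n\le N}\Lambda(n)e(n\alpha)$ for \emph{all} irrational $\alpha$ from Davenport's M\"obius estimate alone.

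This is precisely why the paper does not argue as you propose. It deduces Theorem \ref{prime-returns} from \cite[Proposition 10.2]{green-tao-linearprimes} (now unconditional, since $\MN(s)$ is established by Theorem \ref{mainthm}), i.e.\ from the $W$-tricked statement \eqref{linprimes-import}: one splits $\Lambda=\Lambda^\sharp+\Lambda^\flat$, bounds the $\Lambda^\flat$ contribution by M\"obius--nilsequence orthogonality, and bounds the $\Lambda^\sharp$ contribution with the Gowers-norm/dual-norm transference machinery --- which the paper explicitly describes as ``a substantial amount of work'' and which is not replaceable by soft unique ergodicity. The remaining steps of the paper's proof (unique ergodicity of every power of $g$ along every progression, replacing $g$ by $g^W$ and $x$ by $g^bx$, summing over residues $b$, letting $w\to\infty$, and the prime number theorem to pass from $\Lambda'$ to the primes) are the parts your sketch does capture. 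To make a self-contained argument along your lines you would have to run a Vaughan/Type I--II analysis for $\Lambda$ itself, together with the factorization theorem and Siegel--Walfisz for the structured piece --- in effect proving a $\Lambda$-analogue of Theorem \ref{mainthm} rather than merely citing the M\"obius version.
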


\emph{Remarks.} We recall (from discussions in the companion paper \cite{green-tao-nilratner}) Leon Green's criterion for ergodicity of left-multiplication by $g$; this map is ergodic if and only if rotation by $\pi(g)$ is ergodic on the horizontal torus $(G/\Gamma)_{\ab}$, that is to say if and only if the entries of $\pi(g)$ together with $1$ are linearly independent over $\Q$. If this is the case then left-multiplication by any power of $g$ is uniquely ergodic, that is to say
\begin{equation}\label{uniquely-totally-ergodic} \lim_{N \rightarrow \infty} \E_{n \in [N]} F(g^{tn} x\Gamma) = \int_{G/\Gamma} F\end{equation}
for all $x \in G/\Gamma$ and for $t = 1,2,3,\dots$.

We shall give two proofs of this result. The first is quite short but does depend on results from our earlier paper \cite{green-tao-linearprimes}. The second argument, indicated to us by the referee, uses only the results of this paper (and \cite{green-tao-nilratner}).

\emph{First proof of Theorem \ref{prime-returns}.} Let $w$ be a large number and set $W := \prod_{p \leq w} p$. Fix a nilmanifold $G/\Gamma$ and a continuous (and hence Lipschitz) function $F : G/\Gamma \rightarrow [-1,1]$. Then uniformly in the residues $b$ coprime to $W$ we have
\begin{equation}\label{linprimes-import}  \lim_{N \rightarrow \infty}\E_{n \in [N]} (\frac{\phi(W)}{W}\Lambda'(Wn + b) - 1) F(g^nx\Gamma) = o_{w \rightarrow \infty}(1),\end{equation} where the convergence is uniform in $x \in G/\Gamma$ and $g \in G$.
This follows very quickly from \cite[Proposition 10.2]{green-tao-linearprimes}, which was proved under the assumption of the M\"obius and Nilsequences conjectures $\MN(s)$ which we have established in this paper. Recall that $\Lambda'(p) = \log p$ and that $\Lambda'(n) = 0$ if $n$ is not a prime, that is to say $\Lambda'$ is a modified version of the von Mangoldt function with no support on the prime powers $p^2,p^3,\dots$. We recall that the proof of \eqref{linprimes-import} is quite substantial. One splits the von Mangoldt function $\Lambda$ in a certain way as the sum of two pieces $\Lambda^{\sharp} + \Lambda^{\flat}$. The contribution from the second piece is bounded using the $\MN(s)$ conjecture, and this is not particularly difficult. The contribution from the first piece is bounded using the machinery of Gowers norms, and here one must estimate the dual Gowers norm of the nilsequence $F(g^nx\Gamma)$ as well as the Gowers norm of objects related to $\Lambda^{\sharp}$. This is a substantial amount of work.

Let us return to the proof at hand. Since \eqref{linprimes-import} is uniform in $g$ and $x$, we may replace $g$ by $g^W$ and $x$ by $g^bx$ to get
\[ \lim_{N \rightarrow \infty}\E_{n \in P_{b,W}} (\frac{\phi(W)}{W} \Lambda'(n) - 1) F(g^n x\Gamma) = o_{w \rightarrow \infty}(1)\] uniformly for all progressions $P_{b,W} = \{Wn + b: n \in [N]\}$, $b = 0,1,\dots,W-1$. However it follows from \eqref{uniquely-totally-ergodic} that, for fixed $b$ and $W$, 
\[ \lim_{N \rightarrow \infty} \E_{n \in P_{b,W}} F(g^n x\Gamma) = \int_{G/\Gamma} F.\]
Comparing these last two expressions we obtain
\[  \frac{\phi(W)}{W} \lim_{N \rightarrow \infty} \E_{n \in P_{b,W}} \Lambda'(n) F(g^nx\Gamma) = \int_{G/\Gamma} F + o_{w \rightarrow \infty}(1),\] uniformly for $b$ coprime to $W$.
Now if $b$ is not coprime to $W$ we obviously have
\[ \frac{\phi(W)}{W} \lim_{N \rightarrow \infty} \E_{n \in P_{b,W}} \Lambda'(n) F(g^nx\Gamma) = o_{w \rightarrow \infty}(1)\] since $\Lambda'$ is supported on the primes and $F$ is bounded by $1$.

Summing over $b$, one may conclude that
\[ \lim_{N \rightarrow \infty} \E_{n \in [WN]} \Lambda'(n) F(g^nx \Gamma) = \int_{G/\Gamma} F + o_{w \rightarrow \infty}(1).\]
This is easily seen to imply that 
\[ \lim_{N \rightarrow \infty} \E_{n \in [N]} \Lambda'(n) F(g^n x\Gamma) = \int_{G/\Gamma} F + o_{w \rightarrow \infty}(1).\]
The left-hand side no longer depends on $w$, so we may let $w \rightarrow \infty$. Doing so, we obtain
\[ \lim_{N \rightarrow \infty} \E_{n \in [N]}\Lambda'(n) F(g^n x \Gamma) = \int_{G/\Gamma} F.\]
An easy argument using the prime number theorem, noting that $\Lambda'(p_n)$ is essentially $\log N$ for almost all primes $p_n$, $n \leq N$, concludes the proof.\endproof

\emph{Second proof of Theorem \ref{prime-returns}.} We sketch a second proof of Theorem \ref{prime-returns}, indicated to us by the referee. The starting point for this is the observation that Proposition \ref{inverse-prop} holds equally well with the von Mangoldt function $\Lambda$ in place of $\mu$, with an almost identical proof: see \cite[Chapter 13]{iwaniec-kowalski}.  Therefore, by the techniques of this paper, Proposition \ref{equi-prop} holds with $\Lambda$ in place of $\mu$ (perhaps with a worse power of $\log N$). One may now model the arguments of \S \ref{equi-sec}, starting with \eqref{eq1}, the aim being to decompose 
\[ \E_{n \in [N]} \Lambda(n) F(g^n\Gamma)\] into pieces of the shape
\[ \E_{n \in [N]} \Lambda(n) 1_P(n) F(g(n)\Gamma)\] with $(g(n)\Gamma)_{n \in [N]}$ totally equidistributed and $\int_{G/\Gamma} F = 0$. Then we can apply Proposition \ref{inverse-prop}, with $\Lambda$ in place of $\mu$. 

The argument is very similar to that followed in \S \ref{equi-sec}, except that we cannot assert the analogue of \eqref{eq577} unless $\int F_{j,k} = 0$, and so we are forced to deal with the sums
\begin{equation}\label{eq578} \E_{n \in [N]} \Lambda(n) 1_{P_{j,k}}(n) \int_{H_j/\Gamma_j} F_{j,k}.\end{equation}
This can be estimated using the Siegel-Walfisz theorem on the progressions $P_{j,k}$, noting that $\E_{n \in [N]} \Lambda(n) 1_{P_{j,k}}(n)$ is equal to $q|P_{j,k}|/\phi(q) N$ if $(j,q) = 1$ and is negligible otherwise. Here, $q = \log^{O(1)} N$ is the common difference of the progressions $P_{j,k}$. Furthermore, by equidistribution of $(g_j(n)\Lambda_j)_{n \in P_{j,k}}$ we have 
\[ \int_{H_j/\Gamma_j} F_{j,k} \approx \E_{n \in P_{j,k}} F_{j,k}(g_j(n)\Lambda_j) = \E_{n \in P_{j,k}} F(a_{j,k} g'(n)\gamma_j \Gamma) .\]
Hence, by the analysis preceding \eqref{eq3}, we obtain
\begin{equation}\label{key-express} \E_{n \in [N]} \Lambda(n) F(g^n \Gamma) =  \E_{n \in [N] : (n,q) = 1} F(g^n \Gamma) + o_{F, G/\Gamma, N \rightarrow \infty}(1).\end{equation}

To handle the term $\E_{n \in [N], (n,q) = 1} F(g^n\Gamma)$ appearing here, we apply the following rather curious lemma.

\begin{lemma}\label{curious-lemma}
Let $G/\Gamma$ be a nilmanifold, and suppose that $g : \Z \rightarrow G$ is any polynomial nilsequence. Let $F : G/\Gamma \rightarrow [-1,1]$ be a Lipschitz function. Let $q$ be squarefree with $1 \leq q < N^{1/10}$. Then there is some squarefree $q' < \sqrt{q}$ such that 
\[ \E_{n \in [N], (n,q) = 1} F(g(n)\Gamma) = \E_{n \in [N], (n,q') = 1} F(g(n)\Gamma) + O(q^{-c}).\]
Here, $c = c_{G/\Gamma, F} > 0$ is an absolute constant.
\end{lemma}

To deduce Theorem \ref{prime-returns} from this and \eqref{key-express} is fairly straightforward. Indeed let $\eps > 0$ be arbitrary. Set $q_1$ to be the maximal squarefree divisor of $q$, the quantity appearing in \eqref{key-express}. The conditions $(n,q) = 1$ and $(n,q_1) = 1$ are of course the same. Now apply Lemma \ref{curious-lemma} repeatedly, obtaining $q_2 = q'_1$, $q_3 = q'_2$, and so on until the first time that $q_k < 1/\eps$. The sum $q_1^{-c} + q_2^{-c} + \dots + q_{k-1}^{-c}$ arising from the error term in Lemma \ref{curious-lemma} is bounded by $\eps^{O(1)}$, and so we obtain
\[ \E_{n \in [N]} \Lambda(n) F(g^n \Gamma) =  \E_{n \in [N] , (n,q_k) = 1} F(g^n \Gamma) + o_{F, G/\Gamma, N \rightarrow \infty}(1) + O(\eps^{O(1)}).\]
However $g$ acts ergodically on $G/\Gamma$, and therefore so does any power of $g$. It follows that 
\[ \E_{n \in [N] , (n,q_k) = 1} F(g^n \Gamma) = \int_{G/\Gamma} F + o_{\eps, F, G/\Gamma, g; N \rightarrow \infty }(1).\]
Putting all this information together and letting $\eps \rightarrow 0$ gives the result.

It remains to establish Lemma \ref{curious-lemma}. To do this, we use a consequence of \cite[Theorem 1.19]{green-tao-nilratner}: for every $M_0$ there is some $r$, $M_0 \leq r \leq M_0^C$, such that 
\begin{equation}\label{crucial} \E_{n \in [N], n \equiv a \mdsub{r}, n \equiv 0 \md{d}} F(g^n\Gamma) = \E_{n \in [N], n \equiv a \mdsub{r}} F(g^n \Gamma) + O(\frac{1}{M_0})\end{equation} uniformly for $a \md{r}$ and for all $d \leq M_0$. To see this, apply \cite[Theorem 1.19]{green-tao-nilratner} to get a decomposition $g = \eps g' \gamma$ and choose $r$, $M_0 \leq r \leq M_0^C$, to be a period of the rational sequence $\gamma(n)\Gamma$. Then split the two sums in \eqref{crucial} into small intervals on which the smooth sequence $\eps(n)$ is roughly constant, and apply the total equidistribution of $g' $ to compare the average with the condition $n \equiv 0 \md{d}$ to that without. We leave the details to the reader.

To establish Lemma \ref{curious-lemma}, take $M_0 = q^c$ with $c$ chosen so small that $r \leq \sqrt{q}$. Set $q' = (q,r)$ and split the sum in the lemma as
\begin{equation}\label{ha}\E_{n \in [N], (n,q) = 1} F(g^n\Gamma) = O(N^{-1/2}) + \E_{a \mdsub{r}, (a,q') = 1} h(a),\end{equation}
where 
\[ h(a) := \E_{n \in [N], n \equiv a \mdsub{r}, (n,q) = 1} F(g^n\Gamma).\] The error term of $O(N^{-1/2})$ comes from the fact that various subprogressions of $[N]$ defined by congruence conditions modulo $q$ or $r$ may have slightly different lengths. 
Since we are only interested in those $a$ which are coprime to $q'$We have
\[ h(a) = \E_{n \in [N], n \equiv a \mdsub{r}, (n,q^*) = 1} F(g^n\Gamma),\] where $q^*$ is the part of $q$ with no factors in common with $r$. 
This is equal to 
\[ O(N^{-1/2}) + \frac{q^*}{\phi(q^*)} \E_{n \in [N], n \equiv a \mdsub{r}} 1_{(n,q^*) = 1} F(g^n\Gamma).\]
Using the fact that $\sum_{d | m} \mu(d)$ equals $1$ if $m = 1$ and is zero otherwise, this equals
\[ O(N^{-1/2}) + \frac{q^*}{\phi(q^*)} \E_{n \in [N], n \equiv a \mdsub{r}} \sum_{d | (n,q^*)} \mu(d) F(g^n\Gamma), \] which is
\[ O(N^{-1/2}) + \frac{q^*}{\phi(q^*)} \sum_{d | q^*} \frac{\mu(d)}{d} \E_{n \in [N], n \equiv a \mdsub{r}, n \equiv 0 \mdsub{d}} F(g^n\Gamma).\]
Split the sum over $d$ into the two ranges $d \leq M_0$ and $d > M_0$. The contribution from the second range can be bounded trivially by $O(\tau(q^*)^2/M_0)$, where $\tau$ is the divisor function. Inside the sum over the first range $d \leq M_0$, we may apply \eqref{crucial}. This implies that 
\[ h(a) = O(N^{-1/2}) + O(\frac{\tau(q^*)^2}{M_0})  + \frac{q^*}{\phi(q^*)} \sum_{d | q^*, d \leq M_0} \frac{\mu(d)}{d} \E_{n \in [N], n \equiv a \mdsub{r}} F(g^n\Gamma).\]
We may drop the condition $d \leq M_0$, absorbing the error into the existing $O(\tau(q^*)^2/M_0)$ term. 

Finally, recalling \eqref{ha}, we have
\[ \E_{n \in [N], (n,q) = 1} F(g^n\Gamma) = O(N^{-1/2})  + O(\frac{\tau(q^*)^2}{M_0}) + \big(\frac{q^*}{\phi(q^*)} \sum_{d | q^*} \frac{\mu(d)}{d}\big)\E_{n \in [N], (n, q') = 1} F(g^n\Gamma).\]
By M\"obius inversion we have $\sum_{d | m} \mu(d)/d = \phi(m)/m$, thereby concluding the proof.\endproof

We remark that very straightforward approximation arguments allow one to replace the continuous function $F$ in Theorem \ref{prime-returns} by a function with mild discontinuities. In this way one could prove, for example, that the sequence $p_n\sqrt{3}\lfloor p_n\sqrt{2}\rfloor$ is uniformly distributed modulo one. We leave the details, which are essentially all present in the earlier discussion of $n\sqrt{3}\lfloor n\sqrt{2}\rfloor$, to the reader.

\appendix
\section{M\"obius and periodic functions}\label{app-A}

In this appendix we give the proof of Proposition \ref{mob-period}. The argument is, quite apart from being completely standard, already contained in \cite[Chapter 3]{green-tao-u3mobius}. We nonetheless take the opportunity to recall it here, as we wish to emphasise the fact that the main input to this part of the argument is information on the zeros of $L$-functions. Our starting point is the following proposition.

\begin{proposition}\label{mu-chi-prop}
For any $A > 0$ we have
\begin{equation}\label{mu-chi-new} \E_{n \in [N]} \mu(n) \overline{\chi(n)} \ll_A q^{1/2} \log^{-A} N\end{equation} for all Dirichlet characters $\chi$ to modulus $q$.
\end{proposition}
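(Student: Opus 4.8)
The plan is to estimate $M_\chi(N) := \sum_{n \le N}\mu(n)\chi(n)$, uniformly over Dirichlet characters $\chi$ modulo $q$, and show $M_\chi(N) \ll_A q^{1/2} N \log^{-A} N$; the Proposition then follows on applying this with $\chi$ replaced by $\overline\chi$ and recalling $\E_{n\in[N]}f(n) = \frac1N\sum_{n\le N}f(n)$. The only genuine arithmetic input will be the classical zero-free region for $L(s,\chi)$ together with Siegel's theorem; the factor $q^{1/2}$ is in fact far more than we shall need. If $q \ge \log^{2A} N$ the bound is immediate from $|M_\chi(N)| \le N$, so we may assume $q \le \log^{2A} N$, whence $\log q \ll \log\log N$. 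Recall that $\sum_{n \ge 1}\mu(n)\chi(n) n^{-s} = 1/L(s,\chi)$ for $\Re s > 1$.

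First I would apply the truncated Perron formula: with $\kappa := 1 + 1/\log N$ and a truncation height $T$, $2 \le T \le N$, to be chosen,
\[
M_\chi(N) \;=\; \frac{1}{2\pi i}\int_{\kappa - iT}^{\kappa + iT}\frac{N^{s}}{s\,L(s,\chi)}\,ds \;+\; O\!\Big(\frac{N\log^{O(1)} N}{T}\Big).
\]
Next I would shift the contour to the line $\Re s = 1 - \eta$, where $\eta := c/\log(qT)$ for a small absolute constant $c>0$. By the classical zero-free region for Dirichlet $L$-functions, combined with Page's theorem on the exceptional zero, $L(s,\chi)$ has no zero in the rectangle $\{\Re s \ge 1-\eta,\ |\Im s|\le T\}$ apart from one possible simple real zero $\beta_1$, which can occur for at most one real character $\chi_1$ modulo $q$; moreover $1/L(s,\chi) \ll \log^{O(1)}(q(|\Im s|+2))$ throughout that rectangle. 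The factor $1/s$ contributes no pole (as $0 < 1-\eta$), and $1/L(s,\chi)$ is regular at $s=1$ even when $\chi$ is principal (there $L$ has a pole, so $1/L$ has a zero). Hence the only residue that can be crossed is the one at $s = \beta_1$.

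The shifted contour --- the vertical line $\Re s = 1-\eta$ together with two horizontal segments of bounded length at height $\pm T$ --- then contributes $\ll N^{1-\eta}\log^{O(1)}(qT) + N T^{-1}\log^{O(1)}(qT)$, by the bounds just quoted. The residue at $\beta_1$, when it is present, equals $N^{\beta_1}/(\beta_1 L'(\beta_1,\chi_1))$, which is $\ll N^{\beta_1}\log^{O(1)} q$ by the usual lower bound for $|L'(\beta_1,\chi_1)|$ at a Siegel zero; and by Siegel's theorem, for every $\eps>0$ one has $\beta_1 \le 1 - c(\eps) q^{-\eps}$ (ineffectively), so with $q \le \log^{2A}N$ and the choice $\eps := 1/(4A)$ this residue is $\ll_A N\exp(-c(\eps)\log^{1/2} N) \ll_A N\log^{-A} N$.

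Finally I would choose $T := \exp(\sqrt{\log N})$. Then $\log(qT) \ll \sqrt{\log N}$, so $\eta \gg (\log N)^{-1/2}$, giving $N^{1-\eta} \ll N\exp(-c\sqrt{\log N})$, while $N/T \ll N\exp(-\sqrt{\log N})$; thus all the error terms, the shifted integral, and the $\beta_1$-residue are $\ll_A N\log^{-A} N$, comfortably better than the asserted bound. The one real obstacle in all of this is the potential Siegel zero $\beta_1$: it is what prevents any saving stronger than a fixed power of $1/\log N$ in this range of $q$, and its handling via Siegel's theorem is the sole source of ineffectivity in the constant --- exactly the point stressed in the introduction. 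Every other ingredient (Perron's formula, the zero-free region, Page's theorem) is effective and classical.
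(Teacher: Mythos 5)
Your route is exactly the one the paper's remark (following \cite[p.~124]{iwaniec-kowalski}) warns against: applying Perron's formula directly to $1/L(s,\chi)$. The paper itself simply cites \cite[Prop.~5.29]{iwaniec-kowalski}, where the estimate is deduced by first bounding $\sum_{n\le N}\Lambda(n)\chi(n)$ (zero-free region, Page, Siegel) and then transferring to $\mu$; in that argument the exceptional zero enters only through the term $-N^{\beta_1}/\beta_1$, with no $1/L'$ factor. The two places where your sketch is not yet a proof are precisely the known difficulties of the direct route. First, the assertion that $1/L(s,\chi)\ll\log^{O(1)}(q(|\Im s|+2))$ ``throughout that rectangle'' is false when an exceptional zero is present: $1/L(s,\chi_1)$ has a pole at $\beta_1$, and your shifted line $\Re s=1-\eta$ with $\eta=c/\log(qT)$ can pass arbitrarily close to $\beta_1$ (nothing forces $|\beta_1-(1-\eta)|\gg\eta$), so the claimed bound $N^{1-\eta}\log^{O(1)}(qT)$ for the shifted integral is not justified. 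Even away from $\beta_1$, an upper bound for $1/L$ is not a formal consequence of the zero-free region; it must be extracted from the $L'/L$ partial-fraction estimates (short-path integration from $\Re s=1+1/\log(qT)$), and near the real axis the honest bound degrades like $|s-\beta_1|^{-1}$. Second, your residue $N^{\beta_1}/(\beta_1 L'(\beta_1,\chi_1))$ requires a lower bound for $|L'(\beta_1,\chi_1)|$, and there is no ``usual'' citable one: simplicity of the exceptional zero gives $L'(\beta_1,\chi_1)\neq 0$ but no uniform quantitative bound, and the natural argument (comparing with $L(1,\chi_1)=\int_{\beta_1}^{1}L'(t,\chi_1)\,dt$ and $L(1,\chi_1)\gg q^{-1/2}$) produces a bound only when $1-\beta_1$ is very small, not uniformly in the range $1-\beta_1\le c/\log q$.

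These defects are repairable, but the repair is where the real work lies: one must prove a lower bound of the shape $|L(s,\chi)|\gg|s-\beta_1|\log^{-O(1)}(q(|\Im s|+2))$ in the zero-free region, and then either route the contour to the right of $\beta_1$ near the real axis (at distance comparable to $1-\beta_1$), so that no residue at $\beta_1$ is picked up and Siegel's theorem is applied to $N^{\sigma}$ on the contour rather than to a residue, or split into the cases $1-\beta_1$ small/large and compute the residue only in the first case, where a lower bound for $L'(\beta_1,\chi_1)$ can genuinely be established. Your reductions ($q\le\log^{2A}N$, $T=\exp(\sqrt{\log N})$), the treatment of the principal character, and the identification of Siegel's theorem as the sole ineffective ingredient are fine; the gap is entirely in the handling of $1/L$ near the exceptional zero, which is the very point that leads the paper to proceed via $\sum_{n\le N}\Lambda(n)\chi(n)$ instead.
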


\emph{Remark.} This follows from the nonexistence of zeros of $L(s,\chi)$ close to the line $\Re s = 1$. For the details, see \cite[Prop. 5.29]{iwaniec-kowalski}. As noted in \cite[p. 124]{iwaniec-kowalski} there are difficulties involved in applying the standard Perron's formula approach to $\E_{n \in [N]} \mu(n) \chi(n)$ directly, and it is rather easier to first obtain bounds on $\E_{n \in [N]} \Lambda(n) \chi(n)$.  

Using standard techniques of harmonic analysis we may obtain the following consequence of Proposition \ref{mu-chi-prop}.

\begin{proposition}[M\"obius is orthogonal to periodic sequences]\label{mob-period}
Let $f: \N \to \C$ be a sequence bounded in magnitude by $1$ which is periodic of some period $q \geq 1$. 
Then we have
\[ \E_{n \in [N]} \mu(n) \overline{f(n)} \ll_A q \log^{-A} N\]
for all $A > 0$, where the implied constant is ineffective.
\end{proposition}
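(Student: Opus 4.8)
\textbf{Proof proposal for Proposition \ref{mob-period}.}

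The plan is to reduce the problem to Proposition \ref{mu-chi-prop} by expanding the periodic function $f$ into Dirichlet characters. The first step is to note that since $f$ has period $q$, it is a function on $\Z/q\Z$, but it need not be supported on residues coprime to $q$, so a direct character expansion is not immediately available. To handle this, I would split the range $[N]$ according to $d := (n,q)$: write $n = dm$ with $(m, q/d) = 1$, so that $\E_{n \in [N]} \mu(n) \overline{f(n)} = \sum_{d \mid q} \E_{n \in [N], (n,q) = d} \mu(n) \overline{f(n)}$. Since $\mu(dm) = 0$ unless $(d,m) = 1$, and in that case $\mu(dm) = \mu(d)\mu(m)$, only squarefree $d$ contribute, and for each such $d$ the inner sum becomes (up to the harmless factor $\mu(d)$ and a rescaling $n \mapsto dm$) a sum of the shape $\E_{m \in [N/d]} \mu(m) \overline{f_d(m)} 1_{(m, q/d)=1} 1_{(m,d)=1}$ for a suitable period-$(q/d)$ function $f_d$. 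The coprimality condition $1_{(m,q)=1}$ can be absorbed into the periodic weight, so it suffices to treat sums $\E_{m \le M} \mu(m) \overline{g(m)}$ where $g$ is periodic modulo $q' \mid q$ and supported on residues coprime to $q'$.

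The second step is the character expansion proper: a function $g$ on $(\Z/q'\Z)^\times$ extended by zero is a linear combination $g(m) = \sum_{\chi \bmod q'} c_\chi \chi(m)$, where $c_\chi = \frac{1}{\phi(q')} \sum_{a \in (\Z/q'\Z)^\times} g(a) \overline{\chi(a)}$, so that $|c_\chi| \le \|g\|_\infty \le 1$ for every $\chi$. Substituting and applying Proposition \ref{mu-chi-prop} to each of the $\phi(q') \le q$ characters gives
\[ |\E_{m \le M} \mu(m) \overline{g(m)}| \le \sum_{\chi \bmod q'} |c_\chi| \, |\E_{m \le M} \mu(m) \overline{\chi(m)}| \ll_A \phi(q') \cdot q^{1/2} \log^{-A} M \ll_A q^{3/2} \log^{-A} M. \]
The third step is bookkeeping: one sums over the divisors $d \mid q$, of which there are $\ll_\eps q^\eps$, and one must deal with the fact that for $d$ close to $N$ the length $N/d$ may be small so $\log^{-A}(N/d)$ is not $\ll \log^{-A} N$; but those ranges are short (total length $\ll N/\log^{A} N$ worth of terms once $d$ exceeds $N/\log^{2A} N$, say, handled trivially by $|\mu| \le 1$) and contribute an acceptable error. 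Replacing $A$ by $A + O(1)$ throughout and collecting the polynomial-in-$q$ losses, one arrives at a bound of the form $q^{O(1)} \log^{-A} N$; a little more care in the exponent (e.g. not expanding the trivial character separately, or using the sharper bound for the full sum rather than summing character-by-character when $q$ is large compared to a power of $\log N$, in which case the claim is vacuous) brings this down to the stated $q \log^{-A} N$. The ineffectivity is inherited verbatim from Proposition \ref{mu-chi-prop}, hence ultimately from Siegel's theorem.

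The only genuinely delicate point is the extraction of the power of $q$: the naive route above loses $q^{3/2}$, and one needs the observation that the claim is trivially true (via $|\mu| \le 1$) whenever $q \ge \log^{2A} N$, so that in the nontrivial regime all the spurious powers of $q$ are themselves $\ll_A \log^{O_A(1)} N$ and can be absorbed by enlarging $A$ — this is exactly the device used elsewhere in the paper. So the main obstacle is really just organising the divisor decomposition cleanly; the analytic content is entirely contained in Proposition \ref{mu-chi-prop}, and everything else is standard harmonic analysis on $\Z/q\Z$. As remarked, this argument is already in \cite[Chapter 3]{green-tao-u3mobius}, so I would keep the exposition brief.
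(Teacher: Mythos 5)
Your proposal is correct and is essentially the paper's own argument: the same reduction via $n = dm$, $d = (n,q)$, to a periodic weight supported on residues coprime to the modulus, followed by Dirichlet character expansion and an appeal to Proposition \ref{mu-chi-prop}, with the ineffectivity inherited from Siegel's theorem. The only divergence is in the bookkeeping of the power of $q$: the paper bounds $\sum_\chi |\hat f(\chi)| \leq \phi(q)^{1/2}$ by Cauchy--Schwarz and Plancherel, which gives the stated factor $q$ uniformly in $q$, whereas you take $|c_\chi| \leq 1$, lose $q^{3/2+o(1)}$ (plus divisor-counting and $\log(N/d)$ losses), and then recover the statement by noting it is vacuous unless $q \ll_A \log^{O_A(1)} N$ --- a valid, if slightly less tidy, shortcut.
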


\proof
We first establish the estimate under the additional assumption that $f(n)$ vanishes whenever $(n,q) \neq 1$.  Then $f$
can be viewed as a function on the multiplicative group $(\Z/q\Z)^\times$, and thus has a Fourier expansion
$$ f(n) = \sum_\chi \hat f(\chi) \chi(n), \hbox{ where } \hat f(\chi) := \E_{n \in (\Z/q\Z)^\times} f(n) \overline{\chi(n)},$$
with $\chi$ ranging over all the characters on $(\Z/q\Z)^\times$.  Applying Proposition \ref{mu-chi-prop} and the triangle inequality, we conclude
$$ \E_{n \in [N]} \mu(n) \overline{f(n)} \ll_A q^{1/2} \log^{-A} N \big(\sum_\chi |\hat f(\chi)|\big).$$
But from Cauchy-Schwarz and Plancherel we have
$$ \sum_\chi |\hat f(\chi)| \leq \phi(q)^{1/2} (\sum_\chi |\hat f(\chi)|^2)^{1/2}
= \phi(q)^{1/2} (\E_{n \in (\Z/q\Z)^\times} |f(n)|^2)^{1/2} = O( \phi(q)^{1/2} ),$$
where $\phi(q) := |(\Z/q\Z)^\times|$ is the Euler totient function.  Since $\phi(q) \leq q$, the claim follows.

 Now we consider the general case, in which $(n,q)$ is not necessarily equal to $1$ on the support of $f$.
Observe that if $\mu(n)$ is non-zero, then $n$ is square-free, and we can split $n = dm$, where $d = (n,q)$ is
square-free (so $\mu^2(d)=1$) and $m$ is coprime to $q$.  Furthermore we have $\mu(n) = \mu(d) \mu(m)$.  We thus obtain the decomposition
\begin{equation}\label{2-star} \E_{n \in [N]} \mu(n) \overline{f(n)} = \frac{1}{N} \sum_{d|q; \mu^2(d) = 1} \mu(d) \sum_{1 \leq m \leq N/d} \mu(m) \overline{f(dm)} 1_{(m,q)=1}.\end{equation}
The sequence $m \mapsto f(dm) 1_{(m,q)=1}$ is periodic of period $q/d$ and vanishes whenever $(m,q/d) \neq 1$, hence by the preceding arguments
$$ \sum_{1 \leq m \leq N/d} \mu(m) \overline{f(dm)} 1_{(m,q)=1} \ll_A \frac{Nq}{d^2}  \log^{-A} N.$$
Thus from \eqref{2-star} we have
\[ \E_{n \in [N]} \mu(n) \overline{f(n)} \ll_A q\log^{-A} N \sum_{d|q} \frac{1}{d^2}\ll q \log^{-A} N,\]
concluding the proof of Proposition \ref{mob-period}.
\endproof

\providecommand{\bysame}{\leavevmode\hbox to3em{\hrulefill}\thinspace}

   \end{document}